\newtheorem{theorem}{Theorem}
\newtheorem{proposition}[theorem]{Proposition}
\newtheorem{rem}[theorem]{Remark}
\newtheorem{defn}[theorem]{Definition}
\newtheorem{ex}[theorem]{Example}
\newenvironment{remark}{\begin{rem}\em}{\end{rem}}
\newenvironment{example}{\begin{ex}\em}{\end{ex}}
\newenvironment{proof}{{\noindent\bf Proof.\ }}{\qed}
\newcommand{\grad}{{\nabla}}
\newcommand{\Var}{\mathcal V}
\newcommand{\bN}{{\mathbb N}}
\newcommand{\bC}{{\mathbb C}}
\newcommand{\bP}{{\mathbb P}}
\newcommand{\bR}{{\mathbb R}}
\newcommand{\sC}{{\mathcal C}}
\newcommand{\sF}{{\mathcal F}}
\newcommand{\sL}{{\mathcal L}}
\newcommand{\sJ}{{\mathcal J}}
\newcommand{\sM}{{\mathcal M}}
\newcommand\qed{{\hspace*{\fill}$\Box$\vskip12pt plus 1pt}}
\newcommand{\Cone}[1]{{\mathcal C}(#1)}
\DeclareMathOperator{\Sym}{{\rm Sym}}
\DeclareMathOperator{\rk}{rk}
\DeclareMathOperator{\brk}{brk}
\DeclareMathOperator{\Null}{null}
\DeclareMathOperator{\codim}{codim}
\begin{document}

\title{Tensor decomposition and homotopy continuation}

\author{Alessandra Bernardi\thanks{Dipartimento di Matematica,
Universit\`a di Trento, via Sommarive 14, I-38123 Povo (TN) Italy
(alessandra.bernardi@unitn.it, \url{http://me.unitn.it/alessandra-bernardi}).
This author was partially supported by Institut Mittag Leffler the Royal Swedish Academy of Science (Sweden), Inria Sophia Antipolis M\'editerran\'ee (France), Dipartimento di Matematica, Universit\`a di Bologna, GNSAGA of INDAM (Italy), and the Simons Institute for the Theory of Computing (CA, USA).}
\and
Noah S. Daleo
\thanks{Department of Mathematics, Worcester State University,
Worcester, MA 01602 (ndaleo@worcester.edu, \url{www.worcester.edu/noah-daleo}).
This author was supported in part by NCSU Faculty Research and Development Fund and NSF grant DMS-1262428.}
\and
Jonathan D. Hauenstein
\thanks{Department of Applied and Computational Mathematics and Statistics,
University of Notre Dame, Notre Dame, IN 46556 (hauenstein@nd.edu, \url{www.nd.edu/\~jhauenst}).
This author was supported in part by Army YIP, Sloan Research Fellowship, and NSF grant ACI-1460032.}
\and
Bernard Mourrain
\thanks{Inria Sophia Antipolis M\'editerran\'ee, BP 93, 06902 Sophia Antipolis, France (Bernard.Mourrain@inria.fr, \url{www-sop.inria.fr/members/Bernard.Mourrain/})
}
}

\date{July 6, 2016}

\maketitle

\begin{abstract}

\noindent
A computationally challenging classical elimination theory 
problem is to compute polynomials which vanish 
on the set of tensors of a given rank.  
By moving away from computing polynomials via elimination theory
to computing pseudowitness sets via numerical elimination theory, 
we develop computational methods for computing
ranks and border ranks of tensors along with decompositions.
More generally, we present our approach using 
joins of any collection of irreducible and 
nondegenerate projective varieties $X_1,\ldots,X_k\subset\bP^N$
defined over $\bC$.
After computing ranks over $\bC$, 
we also explore computing real ranks.  
A variety of examples are included to demonstrate
the numerical algebraic geometric approaches.

\noindent {\bf Key words and phrases.} tensor rank, homotopy continuation,
numerical elimination theory, witness sets, numerical algebraic geometry, joins, secant varieties.

\noindent {\bf 2010 Mathematics Subject Classification.}
Primary 65H10; Secondary 13P05, 14Q99, 68W30.


\end{abstract}

\normalsize

\pdfbookmark[1]{Introduction}{Sec:intro}
\section*{Introduction}\label{Sec:intro}

Computing tensor decompositions is a fundamental problem in numerous application areas including computational complexity, signal processing for telecommunications \cite{Co2,deLC}, scientific data analysis \cite{JS,SBG}, electrical engineering \cite{ACCF}, and statistics~\cite{McC}.
Some other applications include the complexity of matrix multiplication \cite{Stra83:laa}, the complexity problem of
P versus NP \cite{Vali}, the study of entanglement in quantum physics~\cite{entanglement}, matchgates in computer science \cite{Vali},
the study of phylogenetic invariants \cite{ar}, independent component analysis~\cite{Comon}, blind identification in signal processing \cite{SGB}, branching structure in diffusion images~\cite{ScSe}, and other multilinear data analytic techniques in bioinformatics and spectroscopy \cite{CoJu}.

One computational algebraic geometric approach 
for deciding if a decomposition can exist is to compute
equations that define secant and join varieties (e.g.,
see \cite[Chap.~7]{JML} for a general overview).  
This can be formulated as a classical elimination theory 
question which, at least in theory, can be computed using Gr\"obner
basis methods.  Moreover, the defining equations do not yield
decompositions, only existential information.
Rather than focusing on computing defining equations, this paper uses numerical algebraic 
geometry (e.g, see \cite{BertiniBook,SW05} for a general overview)
for performing membership tests and computing decompositions.
In particular, we use {\em numerical elimination theory}
to perform the computations based on the methods developed in
\cite{HS13,HS10} (see also \cite[Chap.~16]{BertiniBook}).
This approach differs from several previous methods 
of combining numerical algebraic geometry and elimination theory, e.g., \cite[\S~3.3-3.4]{Exactness} and \cite{SVW01,SVW03}, 
in that these previous methods relied upon interpolation.

The general setup for this paper is as follows.
Let $X\subset\bP^N$ be an irreducible and 
nondegenerate projective variety defined over $\bC$
and $\Cone{X}\subset\bC^{N+1}$ be the affine cone of
$X$.  We let a point $P$ be a nonzero vector 
in $\mathbb{C}^{N+1}$ while 
$[P]$ denotes the line in $\mathbb{C}^{N+1}$ passing through 
the origin and $P$, i.e., $[P]\in \bP^N$ is the 
projectivization of $P\in\bC^{N+1}$.  
The {\it $X$-rank} of $[P]\in \mathbb{P}^N$ (or of $P\in \bC^{N+1}$),
denoted $\rk_X(P)$, is the minimum $r\in \mathbb{N}$ such that $P$ can be written as a linear combination of $r$ elements of $\Cone{X}$:
\begin{equation}\label{eq:JoinDefEq}
P=\sum_{i=1}^r x_i, \; \; \; x_i \in \Cone{X}.
\end{equation}

Let $\sigma_r^0(X)\subset\bP^N$ denote the set of elements
with rank at most $r$ and, for $[x_i]\in \bP^N$, let
$\langle [x_1], \ldots , [x_r]\rangle$ denoted the linear
space spanned by $x_1,\dots,x_r$.  The {\it $r^{\rm th}$ secant variety} of $X$ is 
$$\sigma_r(X) = \overline{\sigma_r^0(X)} = \overline{\bigcup_{[x_1], \ldots , [x_r]\in X}\langle [x_1], \ldots , [x_r]\rangle}.$$ 
In particular, if ${[P]}\in\sigma_r(X)$, then $[P]$ is
the limit of a sequence of elements of $X$-rank at most~$r$.
The {\it $X$-border rank} of ${[P]}$, denoted $\brk_X(P)$, is the minimum
$r\in\bN$ such that ${[P]}\in\sigma_r(X)$.  
Obviously, $\brk_X({P})\leq \rk_X({P})$. 

Secant varieties are just special cases of join varieties. 
For irreducible and nondegenerate projective varieties 
$X_1, \ldots, X_k$, the 
constructible join and join variety of $X_1, \ldots , X_k$, respectively, are
\begin{equation}\label{eq:JoinDef}
J^0(X_1 , \ldots , X_k)= \bigcup_{[x_1]\in X_1, \ldots , [x_k]\in X_k}\langle [x_1] ,\ldots , [x_k] \rangle
\hbox{~~and~~}
J(X_1 , \ldots , X_k)= \overline{J^0(X_1,\ldots,X_k)}.
\end{equation}
Clearly, 
$\sigma_r^0(X)=J^0(\underbrace{X,\ldots,X}_r)$
and
$\sigma_r(X)=J(\underbrace{X,\ldots,X}_r)$.

As mentioned above, one can test, in principle, 
if an element belongs to a certain join variety 
(or if it has certain $X$-border rank) 
by computing defining equations for the join variety (or the secant variety, respectively). Unfortunately, finding defining equations for secant and join varieties is generally a very difficult elimination problem which is far from being well understood at this time.

The following summarizes the remaining sections of this paper.

The knowledge of whether an element lies in a constructible
join (or if it has a certain $X$-rank) is an open condition.
In fact, $\sigma_r^0(X)$ is almost always an open subset of $\sigma_r(X)$, so membership tests for $\sigma_r^0(X)$ 
based on evaluating polynomials to determine 
the $X$-rank of a given element do not exist in general.
Currently, there are few theoretical algorithms for specific cases, {e.g., \cite{babe2,bgi,bbcm,BCMT,CS,IK,OO,Syl}.}
In Section~\ref{Sec:Membership}, we present 
a numerical algebraic geometric approach to join varieties.

Once an element is known to be in 
$J(X_1,\dots,X_k)$ or $\sigma_r(X)$,
numerical elimination theory can also be used to decide if 
the element is in the corresponding constructible set
$J^0(X_1,\dots,X_k)$ or $\sigma_r^0(X)$.  
Rather than test a particular element, 
Section~\ref{Sec:Boundary} considers
the approach first presented in \cite{HS13} 
for computing the codimension one components
of the boundaries of $J^0(X_1,\dots,X_k)$ or $\sigma_r^0(X)$,
namely the codimension one components of 
$\overline{{J(X_1,\dots,X_k)}\setminus{J^0(X_1,\dots,X_k)}}$
and 
$\overline{{\sigma_r(X)}\setminus{\sigma_r^0(X)}}$.
For example, this allows one to compute the codimension one 
component of the closure of the set of points of $X$-border rank at 
most $r$ whose rank is strictly larger than $r$.

Another problem considered in this paper from the numerical point of view regards computing real decompositions 
of a real element.  For example, after computing
the $X$-rank $r$ of a real element ${P}$, we would like to know if
it has a decomposition using $r$ real elements,
that is, determine if the real $X$-rank of $P$ is the same 
as its complex $X$-rank.   
Computing the real $X$-rank 
has recently been studied by various authors, especially
in regards to the typical ranks of symmetric tensors, 
i.e., $r$ such that the symmetric tensors whose real rank 
is $r$ is an open real set.  
Theoretic results in this direction are provided in 
\cite{Ba,Ban,bbo,Ble,BZ,CRe,CO}.
In Section~\ref{Sec:RealRank}, we describe a method using 
\cite{H13} to determine the existence of real decompositions.

We emphasize that a numerical algebraic geometric approach
works well for decomposing generic elements. 
For example, a numerical algebraic geometric based 
approach was presented in \cite{HOOS} for computing 
the total number of decompositions of a general element
in the so-called perfect cases, i.e., when the general
element has finitely many decompositions.  
In every case, one can track a single solution path
defined by a Newton homotopy to compute a decomposition
of a general element, as shown in Section~\ref{Sec:LargeRank}.

Local numerical techniques exist for computing (numerically approximating) decompositions of elements, e.g., see~\cite{KBSIAMReview,Smirnov}.  
Section~\ref{Sec:LargeRank} also develops an approach for 
combining such local numerical techniques with Newton homotopies
for deriving upper bounds on border rank
over both the real and complex numbers.

The last section, Section~\ref{Sec:Examples}, considers a variety
of examples.  

\section{Membership tests}\label{Sec:Membership}

Let $X_1,\dots,X_k\subset\bP^N$ be irreducible and nondegenerate
projective varieties.  Consider the constructible join 
$J^0(X_1,\dots,X_k)$
and join variety $J(X_1,\dots,X_k)$ defined in \eqref{eq:JoinDef}.
One key aspect of this numerical algebraic geometric approach 
is to consider the smooth irreducible variety, called
the {\em abstract join variety},
\begin{equation}\label{eq:TotalSpace}
\sJ = \left\{({[P]},x_1,\dots,x_k)~\left|~x_i\in \Cone{X_i}, P =
  \sum_{i=1}^k x_i\right\}\right.\subset \bP^N\times \Cone{X_1} \times
\cdots \times \Cone{ X_k},
\end{equation}
where {$\Cone{X_i}$ is the affine cone of $X_i$}.
For the projection $\pi({[P]},x_1,\dots,x_k) = {[P]}$, it is clear that
\begin{equation}\label{eq:JoinProjection}
\pi(\sJ) = J^0(X_1,\dots,X_k) \hbox{~~and~~} \overline{\pi(\sJ)} = J(X_1,\dots,X_k).
\end{equation}
The key to using the numerical elimination theory approaches of \cite{HS13,HS10}
is to perform all computations on the abstract join variety $\sJ$.
Moreover, one only needs a numerical algebraic geometric description,
i.e., a witness set or a pseudowitness set,
which we define in Section \ref{Ssec:WitnessSets},
of each irreducible variety $X_i$ to perform such computations on $\sJ$.

Since $\sJ$ naturally depends on affine cones, we will 
simplify the numerical algebraic geometric presentation
by just considering affine varieties.
As \cite[Remark~8]{HS10} states, we can naturally extend from affine
varieties to projective spaces by considering coordinates as sections
of the hyperplane section bundle and accounting for the fact
that coordinatewise projections have a {\em center}, i.e.,
a set of indeterminacy, that is contained in each fiber.
Another option is to restrict to a 
general affine coordinate patch and introduce scalars
as illustrated in Section~\ref{Ssec:BasicMembershipEx}.
Due to this implementation choice, there is the potential for
ambiguity in Section~\ref{Ssec:BasicMembership}, 
e.g., the dimension of the affine cone over a projective variety
is one more than the dimension of the projective variety.
In that section, the meaning of dimension is dependent 
on the implementation choice.

After defining witness sets, we explore a membership test for the join variety $J(X_1,\dots,X_k)$
in Section~\ref{Ssec:BasicMembership}.
This test uses homotopy continuation 
without the need for computing defining equations, e.g., via interpolation
or classical elimination, 
for $J(X_1,\dots,X_k)$.

\subsection{Witness and pseudowitness sets}\label{Ssec:WitnessSets}

The fundamental data structure in numerical algebraic
geometry is a witness set, with numerical elimination theory
relying on pseudowitness sets first described in \cite{HS10}.  
For simplicity, we provide a brief overview of both 
in the affine case with more details available
in \cite[Chap. 8 \& 16]{BertiniBook}.

Let $X\subset\bC^N$ be an irreducible variety.
A {\em witness set} for $X$ is a triple $\{f,\sL,W\}$
where $f\subset\bC[x_1,\dots,x_N]$ such that $X$
is an irreducible component of
\mbox{$\Var(f) = \{x\in\bC^N~|~f(x) = 0\}$,}
$\sL$~is a linear space in $\bC^N$
with $\dim \sL = \codim X$ which intersects $X$ transversally,
and $W := X \cap\sL$.  In particular, $W$ is a collection
of $\deg X$ points in $\bC^N$, called a {\em witness point set}.

If the multiplicity of $X$ with respect to $f$ is greater than $1$,
we can use, for example, isosingular deflation~\cite{Iso}
or a symbolic null space approach of \cite{HMS15},
to replace $f$ with another polynomial system $f'\subset\bC[x_1,\dots,x_N]$
such that $X$ has multiplicity $1$ with respect to $f'$.
Therefore, without loss of generality, we will 
assume that $X$ has multiplicity~$1$ with respect to 
its {\em witness system} $f$.
That is, $\dim X = \dim\Null Jf(x^*)$ for
general $x^*\in X$ where $Jf$ is the Jacobian matrix of~$f$.

\begin{example}\label{ex:WitnessSets}
For illustration, consider
the irreducible variety $X:=\Var(f)\subset\bC^3$ where
$f = \{x_1^2 - x_2, x_1^3 + x_3\}$.
The triple $\{f,\sL,W\}$ forms a witness set for $X$ where
$\sL := \Var(2x_1 - 3x_2 - 5x_3 + 1)$
and $W := X\cap\sL$ which, to 3 decimal places, is the following set of three
points:
$$\left\{\begin{array}{l} 
           (-0.299,0.089,0.027),
            (0.450\pm0.683\cdot\sqrt{-1},-0.265\pm0.614\cdot\sqrt{-1},0.539\mp0.095\cdot\sqrt{-1})
\end{array}\right\}$$
We note that $\sL$ was defined using 
small integer coefficients for presentation purposes
while, in practice, such coefficients are selected
in the complex numbers using a random number generator.
\end{example}

A witness set for $X$ can be used to decide membership in $X$ \cite{SVW01}.  Suppose that $p\in\bC^N$ and
$\sL_p\subset\bC^N$ is a linear space passing through $p$ 
which is transverse to $X$ at $p$ 
with $\dim \sL_p = \dim \sL = \codim X$.
With this setup, $p\in X$ if and only if $p\in X\cap\sL_p$
which can be decided by deforming from $X\cap\sL$.
That is, one computes the convergent endpoints, at $t = 0$, 
of the $\deg X$ paths starting, at $t = 1$, from the points in $W$
defined by the homotopy $X\cap(t\cdot \sL + (1-t)\cdot \sL_p)$.
In particular, $p\in X$ if and only if $p$ arises as an endpoint.

Suppose now that $\pi:\bC^N\rightarrow\bC^M$ is the projection 
defined by $\pi(x_1,\dots,x_N) = (x_1,\dots,x_M)$ 
and $Y := \overline{\pi(X)}\subset\bC^M$.  
Consider the matrix $B = [I_M~0]\in\bC^{M\times N}$ so that $\pi(x) = Bx$.  A {\em pseudowitness set} for $Y$~\cite{HS10}
is a quadruple $\{f,\pi,\sM,U\}$ which is built from a
witness set for $X$, namely $\{f,\sL,W\}$, as follows.
First, one computes the dimension of $Y$, for example, 
using \cite[Lemma~3]{HS10}, namely
\begin{equation}\label{eq:DimensionImage}
\dim Y = \dim X - \dim\Null\left[\begin{array}{c} Jf(x^*) \\ B \end{array}\right]
\end{equation}
for general $x^*\in X$.  

Suppose that $\sM_1\subset\bC^M$
is a general linear space with $\dim \sM_1 = \codim Y$~and \mbox{$\sM_2\subset\bC^N$} is a general linear
space with $\dim \sM_2 = \codim X - \codim Y =: \dim_{gf}(X,\pi)$,
i.e., the dimension of a general fiber of~$X$ with respect to $\pi$.
Let \mbox{$\sM := (\sM_1\times\bC^{N-M})\cap \sM_2$}.
We assume that $\sM_1$ and $\sM_2$ are chosen to be sufficiently
general so that 
$\dim \sM = \dim \sL = \codim X$
and $U := X\cap\sM$ consists 
of \mbox{$\deg Y\cdot\deg_{gf}(X,\pi)$} points
where $\deg_{gf}(X,\pi)$ is the degree of a general
fiber of~$X$ with respect to~$\pi$.
Thus, for the {\em pseudowitness point set} $U$,
$\deg Y = |\pi(U)|$ and \mbox{$\deg_{gf}(X,\pi) = |U|/|\pi(U)|$}.

\begin{remark}\label{RemJoin}
Relation \eqref{eq:DimensionImage} provides an approach for determining if the join variety is
{\em defective}, i.e., smaller than the expected dimension. 
In fact, since the abstract join $\sJ$ of (\ref{eq:TotalSpace}) always
has the expected dimension, namely $\sum_{i=1}^{k}\dim X_{i}$, 
we may take $Y$ to be the join so that $X=\sJ$ with 
$Y = \overline{\pi(\sJ)}$.
\end{remark}

\begin{example}\label{ex:PseudoWitnessSets}
Continuing with the setup from Ex.~\ref{ex:WitnessSets},
consider the map $\pi(x_1,x_2,x_3) = (x_1,x_2)$.
Clearly, $Y := \overline{\pi(X)}$ is the parabola defined by $x_2 = x_1^2$, 
but we will proceed from the witness set for $X$ to construct a pseudowitness
set for $Y$.

We have
$$B = \left[\begin{array}{ccc} 1 & 0 & 0 \\ 0 & 1 & 0 \end{array}\right]
\hbox{~~and~~}
\dim\Null\left[\begin{array}{ccc} 2x_1^* & -1 & 0 \\ 0 & 3(x_1^*)^{2} & x_3^* \\
1 & 0 & 0 \\
0 & 1 & 0 \end{array}\right] = 0
\hbox{~for general~} (x_1^*,x_2^*,x_3^*)\in X.$$
Thus, $\dim Y = \dim X = 1$ and we can take 
$\sM := \Var(2x_1 - 3x_2 + 1)\times\bC\subset\bC^3$.

We can compute the pseudowitness point set $U := X\cap\sM$
starting from the three points in $W = X\cap\sL$
using the homotopy defined by 
$X\cap(t\cdot \sL + (1-t)\cdot \sM)$.
For this homotopy, two paths converge and one diverges
where the two convergent endpoints forming $U$ are
  $$(1,1,-1),~~~~(-1/3,1/9,1/27).$$
In particular, $\deg Y = |\pi(U)| = 2$ and $\deg_{gf}(X,\pi) = |U|/|\pi(U)| = 1$, 
meaning that $\pi$ is generically a one-to-one map from $X$~to~$Y$.
\end{example}

\begin{example}\label{ex:Curves}

As an example of computing a 
pseudowitness set for a join of varieties 
which are not rational, we consider curves 
$C_i\in\bP^4$ which are defined by
the intersection of $3$ random quadric 
hypersurfaces.  Hence, 
$C_i = \Var(f_{i1},f_{i2},f_{i3})$
where $f_{ij}$ has degree $2$
so that each $C_i$ is a complete intersection 
with $\deg C_i = 2^3 = 8$.
Consider the abstract joins 
$$
\begin{array}{rcl}
\sJ_{12} &=& \{([P],x_1,x_2)~|~P = x_1 + x_2, f_{ij}(x_i) = 0 \hbox{~for~} i = 1,2 \hbox{~and~} j = 1,2,3\}, \\
\sJ_{11} &=& \{([P],x_1,x_2)~|~P = x_1 + x_2, f_{1j}(x_i) = 0 \hbox{~for~} i = 1,2 \hbox{~and~} j = 1,2,3\}.
\end{array}
$$
That is, for $\pi([P],x_1,x_2) = [P]$, 
we have
$\overline{\pi(\sJ_{12})} = J(C_1,C_2)$
and 
$\overline{\pi(\sJ_{11})} = J(C_1,C_1) = \sigma_2(C_1)$.

Witness sets for $\sJ_{12}$ and $\sJ_{11}$
show that both abstract join varieties have degree $64$.  
Then, by converting from a witness
set to a pseudowitness set as described above, we
find that $J(C_1,C_2)$ is a hypersurface
of degree $64$ while $\sigma_2(C_1)$ is a
hypersurface of degree $16$ with $\deg_{gf}(\sJ_{11},\pi) = 2$.
\end{example}

\begin{example}\label{ex:Claw}

A key component of the proof of \cite[Thm.~1]{LSClaw}
is computing the dimension of 
$$Y = \overline{\left\{x_{h i}^{11} x_{j k}^{12} x_{\ell m}^{13} + 
x_{h i}^{21} x_{j k}^{22} x_{\ell m}^{23}~\left|~x_{\alpha\beta}^{\gamma\delta}\in\bC,~h,i,j,k,\ell,m\in\{0,1\},~i+k+m\equiv 0 \mbox{~mod~} 2 \right\}\right.} \subset\bC^{32}$$
We can compute $\dim Y$ using \eqref{eq:DimensionImage} as follows.
Let $X = \Var(f)\subset\bC^{32}\times\bC^{24}$ where
$$f(z,x) = \left[\begin{array}{c}
x_{h i}^{11} x_{j k}^{12} x_{\ell m}^{13} + 
x_{h i}^{21} x_{j k}^{22} x_{\ell m}^{23}
- z_{h i j k \ell m} \\ h,i,j,k,\ell,m\in\{0,1\},~i+k+m\equiv 0 \mbox{~mod~} 2 \end{array}\right]$$
and $\pi(z,x) = z$ so that 
$Y = \overline{\pi(X)}$ and $B = [I_{32}~0]$.  
Thus, for general $x^*\in\bC^{24}$ with $z^*\in\bC^{32}$
such that $f(z^*,x^*) = 0$, we have
$$\dim Y = \dim X - \dim\Null\left[\begin{array}{c} Jf(z^*,x^*) \\ B\end{array}\right] = 24 - \dim\Null J_xf(x^*) = 24 - 4 = 20$$
where $J_xf(x)$ is the Jacobian matrix of $f$ with respect to $x$ which only depends on $x$.
In fact, this is the largest dimension possible since
$Y$ arises as a projection of $\sigma_2(\bC^4\times\bC^4\times\bC^4)$
with $\dim \sigma_2(\bC^4\times\bC^4\times\bC^4) = 20$.
\end{example}

In practice, we may compute a pseudowitness point set $U$ 
by starting with one sufficiently general point in the image and performing monodromy loops. 
Such an approach has been used in various applications, e.g., \cite{DHO14,MR15}, 
and will be used in many of the examples in Section \ref{Sec:Examples}.
Since $Y = \overline{\pi(X)}$, we can compute a general point $y\in Y$
given a general point $x\in X$.  Then, pick a general linear space $L$ passing 
through $y$ so that $y\in U=Y\cap L$.
A random monodromy loop consists of two steps, each of which is performed using a homotopy. 
First, we move $L$ to some other general linear space $L'$. 
Next, we move back to $L$ via a randomly chosen path.
During this loop, the path starting at $y\in U$ may arrive at some other point in $U$. 
We repeat this process until no new points are found for several loops.
The completeness of the set is verified via a \textit{trace test}. 
More information about this procedure can be found in, e.g., \cite{DHO14,MR15} and \cite[\S~2.4.2]{D15}. 

The following discusses extending the membership test from witness sets to pseudowitness sets.

\subsection{Membership test for images}\label{Ssec:BasicMembership}

As mentioned above, we can extend the notion of pseudowitness sets
from the affine to the projective case.  For the join variety
$J := J(X_1,\dots,X_k) = \overline{\pi(\sJ)}$ where $\sJ$ is the abstract
join variety, we will simply assume that 
we have a pseudowitness set $\{f,\pi,\sM,U\}$ for $J$.
This pseudowitness set for $J$ provides the required information needed to 
decide membership in the join variety \cite{HS13}.
As with the membership test using a witness set,
testing membership in $J$ requires the tracking of
at most $\deg J$ many paths, i.e., one only needs
$U'\subset U$ with $\pi(U') = \pi(U)$ as discussed in \cite[Remark~2]{HS13}.

Let $d:=\dim J $ 
and suppose that 
$\sM_1$ is the corresponding sufficiently general 
codimension~$d$ linear space from the pseudowitness set
with $\pi(U) = \pi(U') = J\cap\sM_1$.

Given a point {$[P]\in J$}, suppose that
$\sL_P$ is a sufficiently general 
linear space of codimension~$d$ passing through $[P]$.
As with the membership test using a witness set,
we want to compute $J\cap\sL_p$ from $J\cap\sM_1$.
That is, we consider the $\deg J$ paths 
starting, at $t = 1$, from the points in $\pi(U') = \pi(U)$
defined by $J\cap(t\cdot \sM_1 + (1-t)\cdot \sL_P)$.
Since polynomials vanishing on $J$ are not accessible, we
use the pseudowitness set for $J$ to lift these paths 
to the abstract join variety~$\sJ$ which, by assumption, is an irreducible 
component of~$\Var(f)$. 
Thus, $f$ permits path tracking on the
abstract join variety $\sJ$ and hence permits the tracking along the join variety $J$.  
Given $w\in U'$, let $Z_w(t)$ denote the path defined on $\sJ$
where $Z_w(1) = w$.  In particular, we only need to consider $U'\subset U$ since, for any $w'\in U$ with $\pi(w) = \pi(w')$, 
$\pi(Z_w(t)) = \pi(Z_{w'}(t))$. 
With this setup we have the following membership test, which is an expanded version of \cite[Lemma~1]{HS13}.

\begin{proposition}\label{prop:BasicMembership}
For the setup described above with $J^0 := J^0(X_1,\dots,X_k)$, the following hold.
\begin{enumerate}
\item\label{Item1} $[P]\in J$ if and only if there exists $w\in U'$ such that $\lim_{t\rightarrow0} \pi(Z_w(t)) = [P]$.
Moreover, the multiplicity of $[P]$ with respect
to $J$ is equal to $\left|\{w\in U'~|~\lim_{t\rightarrow0} \pi(Z_w(t)) = [P]\}\right|$.
\item\label{Item2} $[P]\in J^0$ if there exists $w\in U$ such that \mbox{$\lim_{t\rightarrow0} \pi(Z_w(t)) = [P]$} and  $\lim_{t\rightarrow0} Z_w(t)\in \sJ$.
\item\label{Item3} If, for every $w\in U'$, $\lim_{t\rightarrow0} Z_w(t)\in \sJ$, then $[P]\in J^0$ if and only if 
there exists $w\in U'$ such that $\lim_{t\rightarrow0} \pi(Z_w(t)) = [P]$.
\item\label{Item4} Let $E_P = \{w\in U~|~[P]=\lim_{t\rightarrow0} \pi(Z_w(t))\}$.  If $E_P\neq\emptyset$ and
$\lim_{t\rightarrow0} Z_w(t)$ does not exist in $\sJ$ for every $w\in E_P$, then either $[P]\in \sJ\setminus\sJ^0$ or
$\dim \left(\pi^{-1}([P])\cap\sJ\right) > \dim_{gf}(\sJ,\pi)$.
\item\label{Item5} If $\dim J = 1$, then $[P]\in J^0$ if and only if 
there exists $w\in U$ such that \mbox{$\lim_{t\rightarrow0} Z_w(t)\in\sJ$}
with $\lim_{t\rightarrow0}\pi(Z_w(t)) = [P]$.
\end{enumerate}
\end{proposition}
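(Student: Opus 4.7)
The plan is to apply the parameter continuation theorem to a single family on the abstract join $\sJ$, namely the slice family $\sJ \cap \sM(t)$ where $\sM(t) := ((t\cdot\sM_1 + (1-t)\cdot\sL_P)\times\bC^{N-M}) \cap \sM_2$; that is, I replace the image-cutting piece of $\sM$ by the moving slice $t\cdot\sM_1 + (1-t)\cdot\sL_P$ while leaving the fiber-cutting $\sM_2$ fixed. Since $\sJ$ is an irreducible component of $\Var(f)$ of multiplicity one and $\sM(1) = \sM$ meets $\sJ$ transversally in the $|U|$ reduced isolated points of $U$, for generic $t\in(0,1]$ the intersection $\sJ \cap \sM(t)$ again consists of $|U|$ reduced points, and the $Z_w(t)$ are precisely the continuous branches starting at $w \in U$. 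Their $\pi$-images sweep out $J\cap(t\cdot\sM_1+(1-t)\cdot\sL_P)$ with each image point covered $\deg_{gf}(\sJ,\pi)$ times, which is why tracking on $J$ only requires $U'\subset U$ (one representative per fiber).

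\textbf{Items 1--3.} Item~\ref{Item1} is the membership test of \cite{HS13}: by genericity of $\sL_P$, the slice $\sL_P$ meets $J$ properly away from $[P]$, so the $\deg J$ image paths emanating from $\pi(U')$ limit at $t=0$ to the cycle $J\cap\sL_P$. Hence $[P]\in J$ iff it arises as an endpoint, and the number of incoming paths equals the local intersection number $i([P]; J,\sL_P)$, which by generality of $\sL_P$ coincides with the multiplicity of $J$ at $[P]$. Item~\ref{Item2} is immediate from \eqref{eq:JoinProjection}: if $Q := \lim_{t\to 0} Z_w(t) \in \sJ$, then $\pi(Q)=[P]$ by continuity and $\pi(Q)\in\pi(\sJ)=J^0$. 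Item~\ref{Item3} combines these: the $\Leftarrow$ direction is Item~\ref{Item2}, and the $\Rightarrow$ direction follows because $J^0 \subset J$ together with Item~\ref{Item1} produces a $w \in U'$ with $\pi(Z_w(t)) \to [P]$, and the blanket hypothesis then guarantees the lift converges in $\sJ$.

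\textbf{Item 4.} This is a contrapositive-style refinement. If $E_P \neq \emptyset$, Item~\ref{Item1} gives $[P]\in J$. Suppose in addition that $[P]\in J^0$ \emph{and} $\dim(\pi^{-1}([P])\cap\sJ) = \dim_{gf}(\sJ,\pi)$; I will deduce that at least one $w\in E_P$ produces a convergent lift, contradicting the hypothesis and forcing the stated dichotomy. By generality of $\sM_2$, the slice $\pi^{-1}([P])\cap\sJ\cap\sM_2$ is then a finite nonempty set, and standard properness of the endgame around an isolated fiber point (applied on a smooth model of $\sJ$ near $\sM_2$) ensures that at least one path $Z_w(t)$ with $w\in E_P$ accumulates at a point of this finite set. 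Negating gives the stated alternative: either $[P]\in J\setminus J^0$, or the fiber dimension strictly exceeds $\dim_{gf}(\sJ,\pi)$.

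\textbf{Item 5 and main obstacle.} The $\Leftarrow$ direction is Item~\ref{Item2}. For $\Rightarrow$, when $\dim J = 1$ the space $C := \sJ \cap \sM_2$ is an algebraic curve and $\pi$ restricts to a dominant morphism $C \to J$ of degree $|U|/|U'| = \deg_{gf}(\sJ,\pi) \cdot$(something), which is in any event finite. Because $[P]\in J^0$, the fiber $\pi^{-1}([P])\cap\sJ$ is nonempty, and by generality of $\sM_2$ this fiber meets $\sM_2$ in at least one point $Q\in C$. A smooth compactification of the source curve makes the moving-line homotopy proper, so $Q$ is the limit of exactly one tracked branch $Z_w(t)$ for some $w\in U$; this is the desired path. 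The main obstacle is Item~\ref{Item4}: distinguishing genuine boundary behavior ($[P]\in J\setminus J^0$) from jumping-fiber behavior requires a careful use of upper semicontinuity of fiber dimension and a local properness argument on $\sJ$, since without genericity of $\sM_2$ relative to the fiber over $[P]$ the lifts can escape in ways that have neither interpretation.
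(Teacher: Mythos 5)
Your proof follows essentially the same route as the paper: the core ingredient is the coefficient-parameter continuation theorem applied to the family $\sJ\cap\sM(t)$ (the paper cites \cite{CoeffParam}), Items 1--3 fall out the same way, Item 4 is the same fiber-dimension dichotomy, and Item 5 uses the same observation that when $\dim J = 1$ every nonempty fiber of $\pi|_\sJ$ has dimension exactly $\dim_{gf}(\sJ,\pi)$ (you phrase it via properness of the compactified map on the curve $C=\sJ\cap\sM_2$, which is an equivalent way of ruling out the fiber-jump branch of Item~4). One small imprecision worth flagging: in Item 5 you assert that $Q$ is the limit of \emph{exactly} one tracked branch, but multiplicity can make several branches land on $Q$; what the argument needs (and delivers) is \emph{at least} one.
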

\begin{proof} 
With the setup above, we know that $J\cap\sL_P$ consists
of finitely many points.  Thus, it follows from~\cite{CoeffParam}
that $[Q]\in J\cap\sL_P$ if and only if there exists $w\in U'$ such that
$[Q] = \lim_{t\rightarrow0}\pi(Z_w(t))$.
Item~\ref{Item1} follows since $[P]\in J$ if and only if $[P]\in J\cap\sL_P$ with the number of such distinct paths ending at $[P]$ 
being the multiplicity of $[P]$ with respect to $J$.

Item~\ref{Item2} follows from $\pi(\sJ) = J^0$.
In fact, $L = \lim_{t\rightarrow0} Z_w(t)\in\sJ$ with $[P]= \pi(L)$.

The assumption in Item~\ref{Item3} yields $J\cap\sL_P = J^0\cap\sL_P$.  
Thus, this item follows immediately from Item~\ref{Item1} since
$[P]\in J^0$ if and only $[P]\in J^0\cap\sL_P = J\cap\sL_P$.

For Item~\ref{Item4}, if $\dim \left(\pi^{-1}([P])\cap\sJ\right) = \dim_{gf}(\sJ,\pi)$, then 
it follows from \cite{CoeffParam} that 
there must exist $w\in U$ such that $\lim_{t\rightarrow0} Z_w(t)\in \sJ$
with $[P]=\lim_{t\rightarrow0} \pi(Z_w(t))$.  The statement follows
since $\dim \left(\pi^{-1}([P])\cap\sJ\right) < \dim_{gf}(\sJ,\pi)$
implies $\pi^{-1}([P])\cap\sJ = \emptyset$, i.e., $[P]\notin\sJ^0$.

For Item~\ref{Item5}, since $\sJ$ is irreducible with $\dim J = 1$, 
we know $\dim_{gf}(\sJ,\pi) = \dim \sJ - 1$.
Hence, every fiber must be either empty or have 
dimension equal to $\dim_{gf}(\sJ,\pi)$.  
\end{proof}

\begin{remark}\label{ex:2x2matrixBrk}
In \cite{HKL13}, 
the secant
variety $X := \sigma_6(\bP^3\times\bP^3\times\bP^3)$
was considered.  The main theoretical result of \cite{HKL13} 
was constructing an exact polynomial vanishing on $X$
which was nonzero at $M_2$, the $2\times2$ matrix multiplication
tensor, thereby showing that the border rank of $M_2$ was at least~$7$.
However, before searching for such a polynomial, 
a version of the membership test described in 
Prop.~\ref{prop:BasicMembership} was used 
in \cite[\S~3.1]{HKL13} to show that $M_2\notin X$
by tracking \mbox{$\deg X = \hbox{15,456}$~paths.}
\end{remark}

We first illustrate Item~\ref{Item1} of 
Prop.~\ref{prop:BasicMembership} on two simple examples
and then, in Section~\ref{Ssec:BasicMembershipEx},
work through a more detailed example.

\begin{example}\label{Ex:CircleCusp}
To highlight the computation of the multiplicity, we consider
two illustrative examples in $\bC^3$:
$$\sJ_1 = \{(x,y,s)~|~x(1+s^2) = 2s, ~y(1+s^2) = 1-s^2\}
\hbox{~~and~~}
\sJ_2 = \{(x,y,s)~|~xs^2 = ys^3 = 1\}.$$
For $\pi(x,y,s) = (x,y)$, clearly
$J_1 := \overline{\pi(\sJ_1)} = \Var(x^2 + y^2 -1)$ 
and 
$J_2 := \overline{\pi(\sJ_2)} = \Var(x^3 - y^2)$.
Even though $P_1 := (0,-1)\notin\pi(\sJ_1)$ and $P_2 := (0,0)\notin\pi(\sJ_2)$, we can use Item~\ref{Item1} of Prop.~\ref{prop:BasicMembership} to show that $P_i\in J_i$
of multiplicity $i$ for $i = 1,2$.  

In particular, since $\deg J_1 = 2$, 
the membership test for $P_1$ with respect to $J_1$ 
requires tracking two paths on $\sJ_1$.  
The projection of the two paths limit to distinct points on $J_1$,
one of which is $P_1$.  Hence, the multiplicity of $P_1$ with
respect to $J_1$ is $1$, i.e., $P_1$ is a smooth point on $J_1$.  
Since $P_1\notin \pi(\sJ_1)$,
the path in $\sJ_1$ whose projection limits to $P_1$ does not 
have an endpoint in $\sJ_1$. 
This is illustrated in Figure \ref{figure:ExamplePaths}. 
\begin{figure}[!ht]
  \centering
  \includegraphics[width=0.5\textwidth]{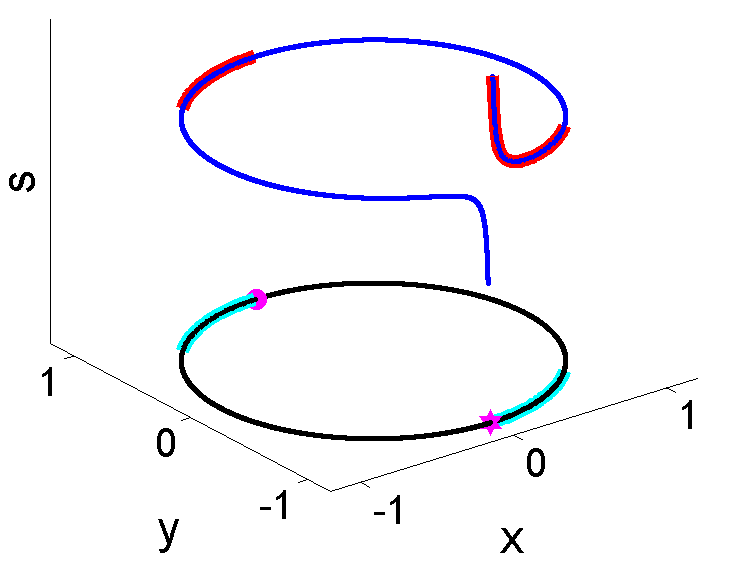}
  \caption{A plot of the two paths inside the curve $\sJ_1$,
  one of which diverges, and their projection into $J_1$ used to show that $P_1 = (0,-1)$, the six-pointed star, is a smooth point of $J_1$.
 }
\label{figure:ExamplePaths}
\end{figure}

Similarly, since $\deg J_2 = 3$, 
the membership test for $P_2$ with respect to $J_2$ 
requires tracking three paths on $\sJ_2$.  
The projection of the three paths limit to two distinct 
points on $J_1$ with the projection of 
two paths ending at $P_2$.  
Hence, the multiplicity of $P_2$ with
respect to $J_2$ is $2$.  Since $P_2\notin \pi(\sJ_2)$,
the two paths in $\sJ_2$ whose projection limits to $P_2$ do 
not have an endpoint in $\sJ_2$.

\end{example}

\subsection{Illustrative example using membership test}\label{Ssec:BasicMembershipEx}

To demonstrate various formulations that we can
utilize with Prop.~\ref{prop:BasicMembership}, 
we consider computing the border rank of the 
cubic polynomial $x^2y$ in $\Sym^3\bC^2$,
thereby verifying the results of
\cite[Table~1]{LT},
namely $\brk(x^2y) = 2$ (see also \cite{Syl,CS,bgi} for more general result).
The computation also yields that either $\rk(x^2y)>2$ 
or $\rk(x^2y) = 2$ with infinitely many decompositions.  
We will reconsider this example in Sections~\ref{Ssec:AdvancedMembership} and~\ref{Ssec:AllDecompositions}.
This subsection ends with a general discussion of Waring problems.

\subsubsection*{Border rank 1 test using affine cones}

We start our computation with the affine cone of the abstract join variety
built from a parameterization:
$$\sJ = \{(P,a)~|~P = v_3(a)\}\subset\bC^4\times\bC^2 \hbox{~~where~~}
v_3(a) = (a_1^3,3a_1^2a_2,3a_1a_2^2,a_2^3)$$
and $P=(P_{1},\ldots,P_{4})$.
With $\pi(P,a) = P$, the affine cone of elements of $\Sym^3\bC^2$ of border rank~$1$~is
$$J = \overline{\pi(\sJ)} = \overline{\{v_3(a)~|~a\in\bC^2\}}\subset\bC^4.$$
To compute $\dim J$, where $J$ is considered an affine variety in $\bC^4$, 
we use \eqref{eq:DimensionImage} with $\dim \sJ = 2$,
$B = [I_4~0]$, and $f(P,a) = P - v_3(a)$.  
It is easy to verify that $\dim \Null \left[\begin{array}{c} Jf(v_3(a^*),a^*) \\ B\end{array}\right] = 0$ for general $a^*\in\bC^2$ thereby showing $\dim J = \dim \sJ = 2$.

We construct a pseudowitness set for $J$, say $\{f,\pi,\sM,U\}$ where 
$\sM$ is defined by
$$\left[\begin{array}{c} P_1 - 3 P_3 - 5 P_4 - 2\\
P_2 + 2 P_3 + 4 P_4 - 3 \end{array}\right] = 0
\hbox{~~and~~}
U = \left\{
\begin{array}{l}
(-0.754,3.29,-4.78,2.32,-0.91\omega^k,1.32\omega^k), \\
(-6.01,7.99,-3.54,0.523,-1.82\omega^k,0.806\omega^k),\\
(3.39,2.06,0.416,0.028,1.50\omega^k,0.304\omega^k)
\end{array}\right\}
$$
for $k=0,1,2$ where $\omega$ is the third root of unity.
Hence, $\deg J = 3$ meaning that we can test membership by
tracking at most $3$ paths, say starting at $U'\subset U$ obtained with $k = 0$.

Since $x^2y$ corresponds to the point $P = (0,1,0,0)$, we consider
the linear space $\sL_P$ containing~$P$ defined by the linear equations
$$\left[\begin{array}{c}
P_1 + (2+\sqrt{-1}) P_3 - 3 P_4\\
(P_2-1) + 3 P_3 - (4+2\sqrt{-1}) P_4
\end{array}\right] = 0.$$
The projection under $\pi$ of the endpoints of the three paths 
derived from deforming $\sM$ to $\sL_p$ are
$$
\begin{array}{l}
(0.181+ 0.284\sqrt{-1}, 0.120+0.313\sqrt{-1}, 0.054 + 0.330\sqrt{-1}, -0.014+0.332\sqrt{-1}), \\
(-0.732 + 0.256\sqrt{-1}, 0.269+0.309\sqrt{-1}, 0.099 - 0.193\sqrt{-1}, -0.114 - 0.010\sqrt{-1}), \\
(-0.400 - 0.149\sqrt{-1}, 0.138 - 0.233\sqrt{-1}, 0.129+0.112\sqrt{-1}, -0.084+0.068\sqrt{-1}).
\end{array}$$
Since $P$ is not one of these three points, we know 
$1 < \brk(x^2y) \leq \rk(x^2y)$.

\subsubsection*{Border rank 2 test using affine cones}

Every polynomial in $\Sym^3\bC^2$ has border rank at most $2$.  
We can verify this simply using \eqref{eq:DimensionImage} with
\begin{equation}\label{Eq:BorderRank2Join}
\sJ = \{(P,a,b)~|~P = v_3(a)+v_3(b)\}\subset\bC^4\times\bC^2\times\bC^2.
\end{equation}
That is, for $\pi(P,a,b) = P$, we have
that $J = \overline{\pi(\sJ)} = \bC^4$.  
Since $\brk(x^2y) > 1$, this immediately shows that $\brk(x^2y) = 2$ which we can verify
by tracking one path defined by
\begin{equation}\label{eq:Membershipx2y}
P(t) = (1-t)(0,1,0,0) + t(-9\sqrt{-1},3-5\sqrt{-1},-1-11\sqrt{-1},-13) = v_3(a) + v_3(b).
\end{equation}
At $t = 1$, we can start with $a = (1+\sqrt{-1},1-2\sqrt{-1})$ and $b = (2-\sqrt{-1},1-\sqrt{-1})$.
One clearly has $\lim_{t\rightarrow0} P(t) = (0,1,0,0)$, 
but the corresponding $(a,b)$ diverge to infinity at $t\rightarrow0$.
Since starting from any of the possible decompositions of $P(1)$, 
namely 
$$P(1) = v_3(\omega^j a) + v_3(\omega^k b) = v_3(\omega^j b) + v_3(\omega^k a)$$
where $j,k = 0,1,2$ and $\omega$ is the third root of unity, yields a divergent path, 
Item~\ref{Item4} of Prop.~\ref{prop:BasicMembership}
states that either $\rk(x^2y) > 2$ or $\rk(x^2y) = 2$ with infinitely many decompositions.

We will focus on distinguishing between these two cases in Sections~\ref{Ssec:AdvancedMembership}
and~\ref{Ssec:AllDecompositions}.

\begin{remark}\label{RemSec} Once \eqref{eq:DimensionImage} has been
used to verify that a certain secant or join variety fill the
ambient space, this technique of tracking only one path can be used in general
as discussed in Section~\ref{Sec:LargeRank}.
Remarkably, it turns out that defective secant varieties 
are almost always those that one was expecting to fill the ambient space. 
\end{remark}

\subsubsection*{Border rank 2 test using affine coordinate patches}

We compare the behavior of using affine cones above with 
the use of an affine coordinate patch with scalars.
The advantage here is that, assuming sufficiently general coordinate
patches, all paths will converge with this setup.  
The paths
for which $\lim_{t\rightarrow0} Z_w(t)$ does 
not exist in $\sJ$ have the corresponding scalar, namely $\lambda_0$, equal to zero.  In particular, consider
$$Z = \left\{(P,\lambda,a,b)~\left|~\begin{array}{ll}P\lambda_0 = v_3(a)\lambda_1 + v_3(b)\lambda_2 \\
r_1(P) = r_2(\lambda) = r_3(a) = r_4(b) = 0 \end{array}\right\}\right.\subset\bC^4\times\bC^3\times\bC^2\times\bC^2$$
where each $r_i$ is a general affine linear polynomial.  
The irreducible component of interest inside of $Z$ 
is~$\overline{Z\setminus\Var(\lambda_0)}$.
Since this irreducible set plays a similar role
of the abstract join variety, we will call it $\sJ$.
With projection $\pi(P,\lambda,a,b) = P$, 
we have that $\overline{\pi(\sJ)} = \Var(r_1(P))$
which again verifies that every element 
in~$\Sym^3\bC^2$ has border rank at most $2$.

For concreteness and simplicity, we take 
$$r_1(P) = P_2-1,~~r_2(\lambda) = 2\lambda_0 - \lambda_1 + 3\lambda_2 - 1,~~r_3(a) = 3a_1 - 2a_2 - 1,~~\hbox{and}~~r_4(b) = 2b_1 + 3b_2 - 1.$$
Consider the path in $\overline{\pi(\sJ)} = \Var(r_1(P))$ 
defined by 
$P(t) = (1-t)(0,1,0,0) + t(1+\sqrt{-1},1,2-\sqrt{-1},1-2\sqrt{-1})$.
This path lifts to a path in $\sJ$, say, starting at $t=1$ with
$$\begin{array}{l}
\lambda = (0.00145+0.000914\sqrt{-1},0.0482+0.0524\sqrt{-1},0.348+0.0169\sqrt{-1}),\\ a = (0.21+0.0532\sqrt{-1},-0.184+0.0797\sqrt{-1}), \hbox{~~and~~} \\
b = (0.157+0.0666\sqrt{-1},0.229-0.0444\sqrt{-1}).\end{array}$$
As mentioned above, the advantage is that this path is convergent,
but the endpoint has $\lambda_0 = 0$ thereby showing that it would have diverged if we used an affine cone formulation. 

\subsubsection*{Waring problem}

This example of computing the rank and border rank 
of $x^2y$ in $\Sym^3\bC^2$
is an example of the so-called Waring problem, namely writing
a homogeneous polynomial as a sum of powers of linear forms.
We leave it as an open challenge problem to derive a 
general formula for the degrees
of the corresponding secant varieties for these problems
since this is equal to the maximum number of paths that need 
to be tracked in order to decide membership.
We highlight some of the known partial results.

The Veronese variety that parameterizes $d^{\rm th}$ 
powers of linear forms in $n+1$ variables is classically known 
to have degree $d^n$. 

In the binary case, the degree of $\sigma_2(X)$ where $X$ is the rational normal curve of degree~$d$ parametrizing forms of type $(ax+by)^d$ is ${d-1 \choose 2}$ \cite{AMS,ran}. More generally in \cite{cs} it is shown  that the variety parameterizing forms of type $(a_0x_0+ \cdots + a_nx_n)^d+(b_0x_0+\cdots +b_nx_n)^d$ has degree 
$$\frac{1}{2}\left( d^{2n}- \sum_{j=0}^n (-1)^{n-j}d^j{2n+1 \choose j}{2n-j \choose j} \right).$$
The same paper also computes the degree of $\sigma_2(X)$ where $X$ is any Segre-Veronese variety which parameterizes multihomogeneous polynomials of type $L_1^{d_1}\cdots L_k^{d_{k}}$ where $L_i$ is a linear form in the variables $x_{0,i}, \ldots , x_{n_i,i}$ for $i=1,\ldots,k$.  In this case, $\deg \sigma_2(X)$ is 
{\scriptsize
$$\frac{1}{2}\left( \left( (n_1, \ldots  , n_k)!d_1^{n_1}\cdots d_k^{n_k} \right)^2-\sum_{l=0}^n{2n+1 \choose l}(-1)^{n-l}\sum_{\sum j_i=n-l}(n_1-j_1, \ldots , n_k-j_k)! \prod_{i=1}^k{n_i+j_i\choose j_j}d_i^{n_i-j_i}\right).$$ }

The degree of some $k$-secants of ternary forms is known, e.g., \cite[Thm.~1.4]{ES} and \cite[Rem.~7.20]{IK}.

\subsection{Reduction to the curve case}\label{Ssec:AdvancedMembership}

Proposition~\ref{prop:BasicMembership} can determine membership
in join varieties as well as provide some insight regarding
membership in the constructible join.  In particular,  
Item~\ref{Item5} of Prop.~\ref{prop:BasicMembership}
shows that deciding membership in a join variety and the corresponding 
constructible join is equivalent when the join variety is a curve.
The following describes one approach for deciding membership
in the constructible join by reducing down to the curve case.
Section~\ref{Ssec:AllDecompositions} considers
computing all decompositions of the form \eqref{eq:JoinDefEq}
and hence can also be used to decide membership in the constructible
join by simply deciding if such a decomposition exists.

Suppose that $\sJ$ is the abstract join with corresponding join
variety $J = \overline{\pi(\sJ)}$.  Since we want to reduce down to
the curve case, we will assume that $d := \dim J > 1$. 
Let $C$ be a general curve section of $J$, that is, $C = J\cap \sL$
where $\sL$ is a general linear space with $\codim \sL = d - 1$.  
Since~$J$ is irreducible and $\sL$ is general, 
the curve $C$ is also irreducible.
Hence, $\sJ_C = \pi^{-1}(C)\cap\sJ$ is irreducible with $C = \overline{\pi(\sJ_C)}$.
Therefore, one can use Item~\ref{Item5} of Prop.~\ref{prop:BasicMembership} to test
membership in $C$ and $C^0 = \pi(\sJ_C)$.
However, since $\sL$ is general, 
testing membership in $C^0$ and $C$ is typically 
not the problem of interest.

Given $[P]$, we want to decide if $[P]$ is a member of $J^0$ or $J$.  Thus,
we could modify the description above to replace $\sL$ with $\sL_P$, a general linear
slice of codimension $d-1$ passing through $[P]$.  If $C_P = J\cap\sL_P$, 
then $\sJ_{C_P} = \pi^{-1}(C_P)\cap\sJ$ need not be irreducible. 
However, since~$\sL_P$ is general through $[P]$, 
the closure of the images under $\pi$ of each irreducible 
component of~$\sJ_{C_P}$ must either be the singleton $\{[P]\}$
or the curve $C_P$.  
Thus, one can apply Item~\ref{Item5} of Prop.~\ref{prop:BasicMembership}
to each irreducible component of $\sJ_{C_P}$ whose image
under~$\pi$~is~$C_P$.

The following illustrates this reduction to the curve case.

\begin{example}\label{Ex:Rank3}
Consider $\sJ$ as in \eqref{Eq:BorderRank2Join} with $J = \overline{\pi(\sJ)} = \bC^4$
and $d = 4$.  Since a general curve section of $J$ is simply a general
line in $\bC^4$, we have $C = \sL$ where $\sL\subset\bC^4$ is a general line.  
It is easy to verify that $\sJ_C = \pi^{-1}(C)\cap\sJ$ is an irreducible curve of degree 30.  

We now consider the point $P$ corresponding to~$x^2y$, namely $(0,1,0,0)$.  
Hence, $C_P = \sL_P$ where $\sL_P$ is a general line through this point.
In this case, $\sJ_{C_P} = \pi^{-1}(C_P)\cap\sJ$ is also an irreducible curve of
degree~30.  Hence, we can apply Item~\ref{Item5} of Prop.~\ref{prop:BasicMembership}
to decide membership of $x^2y$ in $J^0 = \pi(\sJ)$ by deciding membership in 
$C_P^0 = \pi(\sJ_{C_P})$.  Since $C_P^0$ is a line, this is equivalent
to tracking paths defined between a general point and~$P$, as
in \eqref{eq:Membershipx2y}.  Since all paths diverge, 
Item~\ref{Item5} of Prop.~\ref{prop:BasicMembership} yields $\rk(x^2y) > 2$.  

For comparison, suppose that we want to consider $C_{Q} = \sL_Q$,
which is a general line through $Q = (1,3,3,1)$ corresponding to
$x^3 +3\, x^2y + 3\, xy^2 + y^3$ which has rank one.
The curve $\sJ_{C_Q} = \pi^{-1}(C_Q)\cap\sJ$ is the union of two irreducible curves,
say $V_1$ and $V_2$ with $\pi(V_1) = \{Q\}$ and $\overline{\pi(V_2)} = C_Q$,
both of which yield decompositions.
\end{example}

\subsection{Computing all decompositions}\label{Ssec:AllDecompositions}

A fundamental question related to rank is to describe the set of all of 
decompositions of a point~$[P]$.  In numerical algebraic geometry,
this means computing a {\em numerical irreducible decomposition}, i.e.,
a witness set for each irreducible component, 
of the fiber over $[P]$, namely
$$\sF_P := \pi^{-1}([P])\cap \sJ(X_1,\dots,X_k).$$
Computing $\sF_P$ yields a membership test for $J^0(X_1,\dots,X_k)$ since 
$[P]\in J^0(X_1,\dots,X_k)$ if and only if~$\sF_P\neq\emptyset$.
One approach is to directly compute a numerical irreducible decomposition
using~\eqref{eq:JoinDefEq}. 
Another approach is to perform a cascade \cite{RegenCascade,Cascade}
starting with a witness set for~$\sJ$.  
Since $\pi^{-1}([P])$ is defined by linear equations, computing $\sF_P$
can be simply obtained by degenerating each general slicing hyperplane
to a general hyperplane containing $\pi^{-1}([P])$.  After each
degeneration, the resulting points are either contained in $\pi^{-1}([P])$,
forming {\em witness point supersets}, or not.  The ones not contained
in $\pi^{-1}([P])$ are used as the start points for the next degeneration.  
This process is described in detail in \cite[\S~2.2]{HW13}.
From the witness point supersets, standard methods in numerical algebraic
geometry (e.g., see \cite[Chap.~10]{BertiniBook}) are used to produce 
the numerical irreducible decomposition of $\sF_P$.  

After determining that $[P]\in J^0(X_1,\dots,X_k)$ by showing $\sF_P\neq\emptyset$,
a numerical irreducible decomposition of $\sF_P$
can then be used to perform additional computations.  
One such computation is deciding if $\sF_P$ contains
real points, i.e., determining if there is a real decomposition,
which is described in Section~\ref{Sec:RealRank}.  
Another application is to determine
if there exist ``simpler'' decompositions of $[P]$, e.g., 
deciding if \mbox{$[P]\in J^0(X_1,\dots,X_{k-1})$}.  
In the secant variety case, this is equivalent to deciding if 
the rank of $[P]$ is strictly less than $k$.
The following illustrates this idea continuing with $x^2y$
considered in Section~\ref{Ssec:BasicMembershipEx}.

\begin{example}\label{Ex:rank3}
With the setup from Section~\ref{Ssec:BasicMembershipEx},
consider computing all of the rank $3$ decompositions of $x^2y$
using affine cones.  That is, we consider
\begin{equation}\label{Eq:BorderRank3Join}
\sJ = \{(P,a,b,c)~|~P = v_3(a)+v_3(b)+v_3(c)\}\subset\bC^4\times\bC^2\times\bC^2\times\bC^2 = \bC^{10}
\end{equation}
which is irreducible of dimension 6 and degree 57.  Thus, in a witness set for $\sJ$,
we have a general linear space $\sL$ of codimension 6 defined by 
linear polynomials $\ell_i(P,a,b,c) = 0$, $i = 1,\dots,6$
and a witness point set $W = \sJ\cap\sL$ consisting of 57 points.

For $i = 1,\dots,4$, let $m_i(P)$ be a general linear polynomial which vanishes at $(0,1,0,0)$.
The cascade simply replaces the condition $\ell_i = 0$
with $m_i = 0$ sequentially for $i = 1,\dots,4$.  
For $i = 1,2,3$, we have that
$\sJ\cap\Var(m_1,\dots,m_i,\ell_{i+1},\dots,\ell_6)$
consists of 57 points, none of which are not contained in $\sF_P$.
However, $\sJ\cap\Var(m_1,\dots,m_4,\ell_5,\ell_6)$
consists of 45 points, all of which are contained in $\sF_P$.
Hence, $\sF_P\neq\emptyset$ thereby showing that $\rk(x^2y) \leq 3$.
In fact, these 45 points form a witness point set for $\sF_P$,
which is an irreducible surface of degree $45$.  

Even though Ex.~\ref{Ex:Rank3} shows that $\rk(x^2y) > 2$, we 
can verify this by showing that $\sF_P\cap\Var(c)=\emptyset$ 
using the witness point set for $\sF_P$ computed above.
To that end, let $r_i(c)$ for $i = 1,2$ 
be a general linear polynomial vanishing at $c = 0$.
By deforming from the general linear $\ell_5(P,a,b,c)$ to $r_1(c)$,
we obtain that $\sJ\cap\Var(m_1,\dots,m_4,r_1,\ell_6)$ consists of 
36 points, none of which satisfy $c = 0$.  We then deform $\ell_6$ to $r_2$ thereby computing 
$\sJ\cap\Var(m_1,\dots,m_4,r_1,r_2)$, which is empty.
Hence, $\sF_P\cap\Var(c) = \emptyset$ so that 
$\rk(x^2y) > 2$. 
\end{example}

\section{Boundary}\label{Sec:Boundary}

By using the various approaches described in Section~\ref{Sec:Membership},
one is able to use numerical algebraic geometry to determine membership
in both the constructible join $J^0(X_1,\dots,X_k)$ and the join variety $J(X_1,\dots,X_k)$.  An interesting
object is the {\em boundary} 
$\partial := \overline{J(X_1,\dots,X_k)\setminus J^0(X_1,\dots,X_k)}$
which is the closure of the elements which arose by closing
the constructible set $J^0(X_1,\dots,X_k)$.
As a subset of $J(X_1,\dots,X_k)$, the boundary~$\partial$ 
may consist of irreducible components of various codimension.  In the following, 
we describe an approach for computing the irreducible components of $\partial$
which have codimension one with respect to $J(X_1,\dots,X_k)$, denoted $\partial_1$,
derived from~\mbox{\cite[\S~3]{HS13}}.

Following with the numerical algebraic geometric framework,
we aim to compute a pseudowitness set for $\partial_1$.  
To do this, we first consider the case 
where $\sC\subset\bP^N\times\bC^M$
is an irreducible curve and the projection $\pi([P],X) = [P]$ is generically finite-to-one on $\sC$,
i.e.,~$\dim \sC = 1$ and $\dim_{gf}(\sC,\pi) = 0$.  
The boundary of $C = \pi(\sC)$ consists of at most finitely many points
$\partial_C = \overline{C}\setminus C$.

\begin{example}\label{ex:EasyBdry}
Consider the irreducible curve $\sC = \{([a,b],x)~|~a\cdot x = b\}\subset\bP^1\times\bC$
and $\pi([a,b],x) = [a,b]$.  Generically, $\pi$ is a one-to-one map
from $\sC$ to $\bP^1$.  Since we have $x = b/a\in\bC$ when $a\neq 0$, one can 
easily verify that the boundary of $C = \pi(\sC)$ is $\partial_C = \{[0,1]\}\subset\bP^1$.
\end{example}

The first task in computing $\partial_C$ is to compute a superset
of $\partial_C$ consisting of finitely many points.
To that end, consider the closure of 
$\sC\subset\bP^N\times\bC^M$ 
in $\bP^N\times\bP^M$, say $\overline\sC$.
Then, a finite superset of $\partial_C$ is the set of points in $\overline C$
whose fiber intersects ``infinity.''  That is, if we have coordinates $x\in\bC^M$
and $[y]\in\bP^M$ with the embedding given by
$$(x_1,\dots,x_M)\in\bC^M\mapsto[1,x_1,\dots,x_M]\in\bP^M,$$
then a finite superset of $\partial_C$ is $\pi(\overline\sC\cap\Var(y_0))$.

\begin{example}\label{ex:EasyBdry2}
Continuing with the setup from Ex.~\ref{ex:EasyBdry}, one can verify that 
$$\overline\sC = \{([a,b],[y_0,y_1])~|~a\cdot y_1 = b \cdot y_0\}\subset\bP^1\times\bP^1
\hbox{~~and~~}\pi(\overline\sC\cap\Var(y_0)) = \{[0,1]\}.$$
\end{example}

In Ex.~\ref{ex:EasyBdry2}, it was the case that 
$\partial_C = \pi(\overline\sC\cap\Var(y_0))$.  However,
since $\partial_C \subset\pi(\overline\sC\cap\Var(y_0))$ in general,
we must investigate each point in $\pi(\overline\sC\cap\Var(y_0))$
via Sections~\ref{Ssec:AdvancedMembership} and~\ref{Ssec:AllDecompositions} 
to determine if it is contained in $\partial_C$.
\medskip

With the special case in hand, we now turn to the 
general case of $\sJ$ as in \eqref{eq:TotalSpace}.
Let $J$ and $J^0$ be the corresponding join variety and constructible join, 
i.e., $J = \overline{\pi(\sJ)}$ and $J^0 = \pi(\sJ)$,
and $d = \dim J$.  Since the case $d = 0$ is trivial, we will assume $d \geq 1$.
Since we aim to compute witness points sets for the 
codimension~1 components, $\partial_1$, of the boundary 
$\partial = \overline{J\setminus J^0}$, i.e., $\partial_1$
has pure-dimension $d-1$, we can restrict our attention to a general curve
section of $J$, say $C$.  This restriction cuts $\partial_1$
down to finitely many points, i.e., a witness point set for $\partial_1$, which we aim to compute.

Since $C$ is a general curve section,
$\sM = \pi^{-1}(C)\cap\sJ$ is irreducible.  Finally, we take a general curve
section of $\sM$, say $\sC$.  Thus, $\sC$ is an irreducible 
curve with $\overline{\pi(\sC)} = C$ and $\dim_{gf}(\sC,\pi) = 0$.  
Applying the procedure described above yields 
a finite set of points containing $\partial_C$.  
Since the restriction from $\sM$ to a general curve section $\sC$ may have
introduced new points in the boundary, we simply need to investigate
each point with respect to $\sM$ rather than $\sC$
via Sections~\ref{Ssec:AdvancedMembership} and~\ref{Ssec:AllDecompositions}.
In the end, we obtain the finitely many points forming a witness point
set for $\partial_1$. 

\begin{example}\label{Ex:AdvancedMembershipEx}
As with Section~\ref{Ssec:BasicMembershipEx}, we use
a parameterization to compute the codimension one component
of the boundary, $\partial$, in $\Sym^3\bC^2$ of border rank $2$.
Since every polynomial in $\Sym^3\bC^2$ has border rank $2$,
the codimension one component of $\partial$ is a hypersurface: the tangential variety of the rational normal cubic curve.

Since $J = \Sym^3\bC^2$, a general curve section $C$ of $J$ is 
simply a general line.  Following an affine cone formulation as in \eqref{Eq:BorderRank2Join}, 
we take, for exposition, $C$ defined by the equations
$$P_1 + 3P_4 - 2 = P_2 - 4P_4 + 3 = P_3 - 2P_4 - 4 = 0.$$
Since $\dim_{gf}(\sJ,\pi) = 0$, 
we have $\sC = \sM = \pi^{-1}(C)\cap\sJ$ is the curve
$$\sC = \{(P,a,b)~|~P = v_3(a) + v_3(b) \in C\}\subset\bC^4\times\bC^2\times\bC^2.$$
Next, we compute the closure, $\overline\sC$, of $\sC$ in $\bC^4\times\bP^4$
where $\bC^2\times\bC^2\hookrightarrow\bP^4$ given by
$$(a,b)\in\bC^2\times\bC^2 \mapsto [1,a_1,a_2,b_1,b_2]\in\bP^4.$$
With coordinates $[y_0,\dots,y_4]\in\bP^4$, we find that 
$\pi(\overline\sC\cap\Var(y_0))$ consists of the following four points:
$$\begin{array}{ll}
(2.308,-3.410,3.794,-0.103), & (-35.743,47.325,29.163,12.581), \\
(4.328,-6.103,2.448,-0.776), & (0.018,-0.357,5.321,0.661). \end{array}$$
Finally, we verified that each of these points 
corresponds to an element that has rank larger than $2$
via Sections~\ref{Ssec:AdvancedMembership} and~\ref{Ssec:AllDecompositions}.  Hence,
the codimension one component of $\partial$, namely $\partial_1$, 
is a degree $4$ hypersurface.  

Although we can use numerical algebraic geometry to test membership in 
this hypersurface via Section~\ref{Sec:Membership}, 
we can also easily recover the defining equation
exactly in this case using \cite{Exactness}:
$$P_1^2 P_4^2 - 6 P_1 P_2 P_3 P_4 + 4 P_1 P_3^3 + 4 P_2^3 P_4 - 3 P_2^2 P_3^2 = 0.$$
Clearly, $(0,1,0,0)$, corresponding to $x^2y$, lies on this hypersurface.
\end{example}

\section{Real decompositions}\label{Sec:RealRank}

For a real $[P]$, i.e., the line $[P]$ is defined by linear polynomials with real coefficients, 
a natural question is to determine if {\em real} decompositions exist
after computing the fiber $\sF_P$ as in Section~\ref{Ssec:AllDecompositions} showing that decompositions over the
complex numbers exist.
With a witness set for each irreducible component of $\sF_P$,
where all general choices involve selecting real numbers,
the homotopy-based approach of \cite{H13}
(see also~\cite{WuReid}) can be used to determine if 
the irreducible component contains real points.
The techniques described in~\cite{H13,WuReid} rely
upon computing critical points of the distance function
as proposed by Seidenberg~\cite{Seidenberg} (see~also~\cite{RealSolving,EuclideanDistance,Real}). 
For secant varieties, this yields a method to determine the real rank
of a real element.

Let $F$ be a system of $n$ polynomials in $N$ variables with real coefficients and
\mbox{$V\subset\Var(F)\subset\bC^N$} be an irreducible component.  Fix
a real point $x^*\in\bR^N$ such that $x^*\notin\Var(F)$ sufficiently general.  Following Seidenberg~\cite{Seidenberg},
we consider the optimization problem
\begin{equation}\label{eq:Min}
\min\{\|x-x^*\|_2^2~|~x\in V\cap\bR^N\}.
\end{equation}
Every connected component $C$ of $V\cap\bR^N$ has a global minimizer of
the distance functions from $x^*$ to $C$, i.e., there exists $x\in C$ such that
$\|x-x^*\|_2^2 \leq \|z-x^*\|_2^2$ for every $z\in C$.  
Thus, there exists $\lambda\in\bP^n$ with
\begin{equation}\label{eq:CritPts}
G(x,\lambda) = \left[\begin{array}{c} F(x) \\ 
\lambda_0 (x-x^*) + \sum_{i=1}^n \lambda_i \grad F_i(x) \end{array}\right] = 0
\end{equation}
where $\grad F_i$ is the gradient of $F_i$.  
For the projection map $\pi(x,\lambda) = x$, the set $\pi(\Var(G))\subset\bC^N$
is called the set of {\em critical points} of~\eqref{eq:Min} and it intersects
every connected component of $V\cap\bR^N$.  Hence, $V\cap\bR^N = \emptyset$
if and only if there are no real critical points for~\eqref{eq:Min}.

This method allows one to decide if a real decomposition exists
by deciding if there exists a real critical point
of the distance function.  As a by-product, one
obtains the closest decomposition to the given point.
When the set of critical points may be positive-dimensional, 
the approach presented in \cite{H13} uses a homotopy-based approach
to reduce down to testing the reality of finitely many critical points.
Therefore, the problem of deciding if a real decomposition exists can 
be answered by deciding the reality of finitely many points. 

\begin{example}\label{Ex:rank3real}
Consider deciding if the real rank of $x^2y$ in $\Sym^3\bC^2$ 
is the same as the complex rank, namely~$3$.
In Ex.~\ref{Ex:rank3}, we computed $\sF_P$, which is irreducible
of dimension $2$ and degree $45$.  In particular, we can take 
$$F(a,b,c) = \left[\begin{array}{c} a_1^3 + b_1^3 + c_1^3 \\ 3(a_1^2a_2 + b_1^2b_2 + c_1^2c_2) - 1 \\
3(a_1a_2^2 + b_1b_2^2 + c_1c_2^2) \\ a_2^3 + b_2^3 + c_2^3 \end{array}\right].$$
We aim to compute the critical points of the distance from 
\begin{equation}\label{eq:ABCdistance}
\alpha = (1,2), ~~ \beta = (-2,1), ~~\gamma = (1,-1)
\end{equation}
which arise from the solutions $(a,b,c)\in\bC^6$ and $\delta\in\bP^4$ of
$$G(a,b,c,\delta) = \left[\begin{array}{c}
F(a,b,c) \\
\delta_0(a-\alpha,b-\beta,c-\gamma) + \sum_{i=1}^4 \delta_i\grad F_i(a,b,c)\end{array}\right].$$
Solving $G = 0$ yields 234 critical points, of which 8 are real.  
Hence, the real rank of $x^2y$ is indeed $3$ (cfr.~\cite{bcg}) where the one of minimal distance
from $(\alpha,\beta,\gamma)$ is the decomposition (to three digits)
\begin{equation}\label{Eq:MinDist}
x^2y = (0.721 x + 0.2849 y)^3 + (-1.429 x  + 1.101 y)^3 + (1.365 x - 1.107 y)^3.
\end{equation}
\end{example}

Since minimizing the distance to a nongeneric point
can yield potential issues, one should treat the center 
point $x^*$ as a parameter and utilize a parameter homotopy (e.g., see \cite[Chap.~6]{BertiniBook}).  The number of such paths
to track with this setup is called the 
Euclidean distance degree \cite{EuclideanDistance}.

\begin{example}\label{ex:CircleImage}
Consider solving the following optimizaton problem
$$\min\{x^2+(y+2)^2~|~(x,y)\in\overline{\pi(\sJ_1)}\cap\bR^2\}$$
where $\sJ_1$ and $\pi$ as in Ex.~\ref{Ex:CircleCusp}.
Since $\overline{\pi(\sJ_1)} = \Var(x^2 + y^2 - 1)$,
it is clear that the two critical points are $(0,\pm 1)$
with $(0,-1)\notin\pi(\sJ_1)$.  Thus, we consider this
as a member of the family of optimization problems
$$\min\{(x-q_1)^2+(y-q_2)^2~|~(x,y)\in\overline{\pi(\sJ_1)}\cap\bR^2\}$$
parameterized by $q$.  The critical points are obtained by solving
$$\left[\begin{array}{c} x(1+s^2) - 2s \\ y(1+s^2)-(1-s^2) \\
\lambda_0\left[\begin{array}{c} x - q_1 \\ y - q_2 \\ 0 \end{array}\right] + \lambda_1\left[\begin{array}{c} 1+s^2 \\ 0 \\ 2sx - 2 \end{array}\right] + \lambda_2\left[\begin{array}{c} 0 \\ 1+s^2 \\ 2sy + 2s \end{array}\right]\end{array}\right] = 0$$
which, for a general $q\in\bC^2$, has two solutions.
A parameter homotopy that deforms the parameters
from the selected value of $q$ to $(0,-2)$ yields solution two paths.
As paths in $\bC^3\times\bP^2$, only one of these two paths has a limit point since $(0,-1)\notin\pi(\sJ_1)$.
However, since we are interested in $\overline{\pi(\sJ_1)}$,
we only need to observe the limit
of the projection of these two paths in $(x,y)\in\bC^2$,
which yields the two critical points $(0,\pm 1)$.
\end{example}

The computation of {\em all} critical points provides a global approach
for deciding if a real decomposition exists.  
Such a global approach may be computationally expensive when the
number of critical points is large.  Thus, we also describe a local
approach based on {\em gradient descent homotopies} \cite{GH13}
with the aim of computing a real critical point.  
Although there is no guarantee, such a local approach can 
provide a quick affirmation that a real decomposition exists.

With the setup as above, we consider the gradient descent homotopy
$$H(z,\delta,t) = \left[\begin{array}{c} F(z) - t\cdot F(x^*)\\ 
\delta_0 (z-x^*) + \sum_{i=1}^n \delta_i \grad F_i(z) \end{array}\right] = 0.$$
Clearly, $H(x,\lambda,0) = G(x,\lambda) = 0$ for $G$ as in \eqref{eq:CritPts}.
We consider the homotopy path $(z(t),\delta(t))$ 
where $z(1) = x^* \in \bR^N$ and $\delta(1) = [1,0,\dots,0]\in\bP^n$.
If this homotopy path is smooth for $0 < t \leq 1$
and converges as $t\rightarrow0$, then $z(0)$ is a real critical point
with respect to $F$.  We note that $H$ is a 
so-called {\em Newton homotopy} 
since $\partial H/\partial t$ is independent of $z$ and $\delta$.
Newton homotopies will also be used below in Section~\ref{Sec:LargeRank}.

One can quickly try gradient descent homotopies for various $x^*\in\bR^N$ 
with the goal of computing a real critical point, provided one exists.

\begin{example}\label{Ex:rank3real2}
With the setup from Ex.~\ref{Ex:rank3real} and \eqref{eq:ABCdistance}, we consider the
gradient descent homotopy
$$H(a,b,c,\delta,t) = \left[\begin{array}{c}
F(a,b,c) - t\cdot F(\alpha,\beta,\gamma) \\
\delta_0(a-\alpha,b-\beta,c-\gamma) + 
\sum_{i=1}^4 \delta_i\cdot\nabla F_i(a,b,c)
\end{array}\right].$$
The path, which starts at $t = 1$ with $(\alpha,\beta,\gamma,[1,0,\dots,0])$
yields a smooth and convergent path with the endpoint corresponding
the decomposition of minimal distance from $(\alpha,\beta,\gamma)$ in \eqref{Eq:MinDist}.
\end{example}

\section{Generic cases}\label{Sec:LargeRank}

When the join variety $J(X_1,\dots,X_k)$ fills the ambient
projective space, the degree of the join variety is~$1$.  
In this section, we modify the approach
presented in Prop.~\ref{prop:BasicMembership}
to use a Newton homotopy which can compute a decomposition of a 
generic tensor by tracking one path.
Such paths can even be tracked certifiably \cite{HHL14,HL14}.

Let $[P]\in J(X_1,\dots,X_k)$ be generic.
Thus, the dimension and degree of the fiber over $[P]$
is the same over a nonempty Zariski open subset of $J(X_1,\dots,X_k)$,
i.e., 
$$d := \dim_{gf}(\sJ,\pi) = \dim \pi^{-1}([P])\cap\sJ
\hbox{~~and~~} \deg_{gf}(J,\pi) = \deg \pi^{-1}([P])\cap\sJ.$$
The first step for computing a decomposition of $[P]$
is to produce a starting point.  This is performed by 
selecting generic $x_i^*\in \Cone{X_i}$ and computing $P^* = \sum_{i=1}^k x_i^*$.  
That is, $([P^*],x_1^*,\dots,x_k^*)\in\sJ$ is sufficiently generic
with respect to $\sJ$ and $[P]$.  

With this setup, consider the homotopy that deforms
the fiber as we move along the straight line from $[P^*]$ to $[P]$,
namely $\pi^{-1}(t[P^*] + (1-t)[P])\cap\sJ$.  
If $d > 0$, i.e., the fiber is positive-dimensional,
we can reduce down to tracking along a path 
by simply intersecting with a generic linear space $\sL$ 
of codimension $d$ passing through the point $([P^*],x_1^*,\dots,x_k^*)$.
This results in the Newton homotopy
$$\pi^{-1}(t[P^*] + (1-t)[P])\cap\sJ\cap\sL$$
where, at $t = 1$, we have start point $(x_1^*,\dots,x_k^*)$.  
The endpoint of this path yields a decomposition of $[P]$ in 
the form \eqref{eq:JoinDefEq}.

\subsection{Illustrative example}\label{Ssec:GenericDecomp}

We demonstrate decomposing a general element via tracking
one path on cubic forms in $3$ variables.
For a cubic form $C(x)$, we aim to write it as 
\begin{equation}\label{Eq:DecompCubic}
C(x) = Q(x)\cdot L_1(x) + L_2(x)^3
\end{equation}
where $Q(x) = q_0 x_0^2 + q_1 x_0 x_1 + \cdots + q_5 x_2^2$ is a quadratic form and $L_i(x) = x_0 + a_{i1}x_1 + a_{i2}x_2$ is a linear form with $q_k,a_{ij}\in\bC$. 
Geometrically, this means $C(x)\in J(\mathcal{O}^2(\nu_3(\mathbb{P}^2)), \nu_3(\mathbb{P}^2))$ 
where $\mathcal{O}^2(\nu_3(\mathbb{P}^2))$ is the second osculating variety to the Veronese 
surface $\nu_3(\mathbb{P}^2)$.
By \eqref{eq:DimensionImage}, it is easy to verify that
a general cubic $C(x)$ must have finitely many decompositions of the form~\eqref{Eq:DecompCubic}.  
We will compute decompositions~of
$$C_1(x) = x_0^3 + x_1^3 + x_2^3 \hbox{~~~~and~~~~} C_2(x) = 4x_0^2x_2+x_1^2x_2-8x_0^3$$
where the cubic $C_2$ defines a curve called the ``witch of Agnesi.''
Starting with
$$\begin{array}{rcl}
C^*(x) &=& 
\left(x_0^2 + (1+\sqrt{-1})x_0x_1 + 3x_0x_2 - 2 x_1^2 + (3-\sqrt{-1})x_1x_2 + 2x_2^2\right)(x_0 + 2x_1 + 3x_2) \\
& & ~~~~+~\left(x_0-(3+\sqrt{-1})x_1+5x_2\right)^3,
\end{array}$$
the Newton homotopy deforming from $C^*$ to $C_i$ which is obtained by taking coefficients of
$$tC^*(x) + (1-t)C_i(x) = (q_0x_0^2 + q_1 x_0 x_1 + \cdots + q_5x_2^2)(x_0 + a_{11}x_1 + a_{12}x_2) + (x_0 + a_{21}x_1 + a_{22}x_2)^3$$
yields the following decompositions, which
we have converted to exact representation using \cite{Exactness}:
$$
\begin{array}{rcl}
C_1(x) &=& (-3x_0x_1 + 3(1-\sqrt{-3})x_0x_2/2 - 3x_1^2 + 3(1-\sqrt{-3})x_1x_2/2)(x_0-(1-\sqrt{-3})x_2/2) \\
& & ~~~~+~(x_0 + x_1 - (1-\sqrt{-3})x_2/2)^3,\medskip\\
C_2(x) &=& (-9x_0^2 - 9x_1^2/4)(x_0 - \sqrt{3} x_1/6 - 4 x_2/9) + (x_0 - \sqrt{3} x_1/2)^3. \\
\end{array}
$$

\subsection{Projections and Newton homotopies}\label{Ssec:LocalNewton}

When the join variety does not fill the ambient space,
we will modify our approach by combining Newton homotopies,
projections, and local numerical solving techniques, e.g., 
\cite{KBSIAMReview,Smirnov}.  This yields a local method
which can be used to show upper bounds on rank and border
rank over $\bC$ and $\bR$.  

If $c := \codim J(X_1,\dots,X_k) > 0$, let $\psi$ 
be a linear map such that $\overline{\psi(J(X_1,\dots,X_k))}$ 
fills the ambient space and $\dim_{gf}(J(X_1,\dots,X_k),\psi) = 0$.
We could apply the method above to attempt
to decompose $\psi([P])$ using a Newton
homotopy in $\overline{\psi(J(X_1,\dots,X_k))}$
starting from a randomly selected point.  
Since the fiber $\psi^{-1}(\psi([P]))\cap J(X_1,\dots,X_k)$ 
may contain many other elements in addition to $[P]$,
we may need to run the Newton homotopy approach
with many starting points to increase our
chances of ending at a decomposition of $[P]$.

Another approach is to use \cite{HOOS}
to compute all of the elements in a fiber
over a general element $\psi([P^*])$ and then use
the Newton homotopy to track all of the corresponding paths 
as $\psi([P^*])$ is deformed to $\psi([P])$.  
If $\psi$ was generic with respect to $[P]$ so that the fiber 
$\psi^{-1}(\psi([P]))\cap J(X_1,\dots,X_k)$ 
is zero dimensional, then tracking all of these paths
and observing in any end at $[P]$ is equivalent to
Item~\ref{Item1} of Prop.~\ref{prop:BasicMembership}.

Rather than utilizing a global method, 
we will use local numerical decomposition methods, e.g., 
see~\cite{KBSIAMReview,Smirnov}, to ``seed'' our Newton homotopy.
Such local methods generally use optimization techniques 
to numerically approximate a decomposition, 
and these approximations typically provide excellent
starting points.  Moreover, if the homotopy is defined 
by polynomials with real coefficients
and the start point is real, then the endpoint is also real
provided that the path is smooth on $(0,1]$.  This observation
allows one to yield upper bounds on the {\em real} rank and border rank by demonstrating the existence of {\em real} paths.

\begin{example}\label{ex:LocalNewton}
Reconsider $\sJ_1$ from Ex.~\ref{Ex:CircleCusp} where
$J_1 := \overline{\pi(\sJ_1)} = \Var(x^2+y^2-1)\subset\bC^2$.
For the projection map $\psi(x,y) = 2x + 3y$, it
is easy to verify that $\overline{\psi(J_1)} = \bC$
with $\dim_{gf}(J_1,\psi) = 0$ and $\deg_{gf}(J_1,\psi) = 2$.
We aim to show that there is a real path $(x(t),y(t),s(t))\in \sJ_1$
such that $\lim_{t\rightarrow0} \pi(x(t),y(t)) = (0,-1)$.  

For this simple example, we can easily construct a point nearby $(0,-1)$ which has a real point in its fiber, say $(x^*,y^*,s^*)=(20/101,-99/101,10)\in\sJ_1$, which we take as the start point for the Newton homotopy
$$H(x,y,s,t) = \left[\begin{array}{c}
x(1+s^2) - 2s \\ 
y(1+s^2) - (1-s^2) \\
\psi(x,y) - (t\cdot\psi(x^*,y^*) + (1-t)\cdot\psi(0,-1))
\end{array}\right].$$
One can easily observe that the path $(x(t),y(t),s(t))$
with $(x(1),y(1),s(1)) = (x^*,y^*,s^*)$ is smooth
on $(0,1]$ with $(x(t),y(t))\rightarrow(0,-1)$ as $t\rightarrow0$.
Since $(0,-1)\notin\pi(\sJ_1)$, this path must diverge in $\sJ_1$.

For numerical computations, it can be easier to track
convergent paths.  In this case, one can compactify
the fiber with respect to $\pi$ to yield
$$H(x,y,s,t) = \left[\begin{array}{c}
x(s_0^2+s_1^2) - 2s_0s_1 \\ 
y(s_0^2+s_1^2) - (s_0^2-s_1^2) \\
\psi(x,y) - (t\cdot\psi(x^*,y^*) + (1-t)\cdot\psi(0,-1))
\end{array}\right]$$
with start point $x = x^*$, $y = y^*$, and $s = [1,s^*]$
at $t = 1$.  Thus, one is actually tracking the path
on the closure of $\sJ_1$ in $\bC^2\times\bP^1$.
The endpoint of this smooth path on $(0,1]$
is $(x,y) = (0,-1)$ with $s = [0,1]$.
\end{example}

\section{Examples}\label{Sec:Examples}

The previous sections have described various approaches for
computing information about join and secant varieties along with several
illustrative examples.  In this section, we present 
several larger examples which were computed using the 
methods described above with computations facilitated by {\tt Bertini} \cite{Bertini,BertiniBook}.

\subsection{Complex multiplication tensor}\label{Sec:ComplexMult}

Complex multiplication can be treated as a bilinear map from $\bR^2\times\bR^2\rightarrow\bR^2$, namely
$$(a,b)\cdot(c,d)\mapsto(ac-bd,ad+bc),$$
which, using the definition, involves $4$ multiplications. 
Treating this as a bilinear map from $\bC^2\times\bC^2\rightarrow\bC^2$, we will use the above approaches to compute the rank and border rank (over $\bC$) of this bilinear map, both of which are $2$. 
We will then demonstrate how our method shows that the real rank of this bilinear map is~$3$.
In particular, the~decomposition~by~Gauss,~namely
\begin{equation}\label{eq:Gauss}
ac-bd = (a+b)\cdot c - b\cdot (c+d), ~~~~ ad+bc = (a+b)\cdot c + a\cdot (d-c),
\end{equation}
shows that the real rank is at most $3$ via the three 
multiplications $(a+b)\cdot c$, $b\cdot(c+d)$, and $a\cdot(d-c)$.

\subsubsection*{Over the complex numbers}
Let $T:\bC^2\times\bC^2\rightarrow\bC^2$ denote the complex multiplication bilinear map.
We first aim to compute $\brk T$ and~$\rk T$. 
Observe that $T\in \bC^2\otimes \bC^2\otimes \bC^2$ with its
rank and border rank corresponding to computing minimal decompositions with respect to the Segre variety $$S:=Seg(\bP^1\times \bP^1 \times \bP^1)\subset \bP(\bC^2\otimes \bC^2\otimes \bC^2)=\bP(\bC^8).$$
To accomplish this, we compute
a pseudowitness set for $S = \sigma_1(S)\subset\bP(\bC^8)$
for which $\deg \sigma_1(S) = 6$.
The membership test described in Prop.~\ref{prop:BasicMembership} tracked $6$ paths and found that each path 
converged to some finite endpoint which does not correspond to $T$. 
Therefore, $\rk T \geq \brk T > 1$. 

Next, we turn our attention to $\sigma_2(S)$,
which fills the whole space. Hence, we know $\brk T = 2$.
We use the method from Section~\ref{Sec:LargeRank} 
to track one solution path which indeed converges to a decomposition
of $T$ thereby showing $[T]\in\sigma_2^0(S)\subset\sigma_2(S)$,
i.e., $\rk T = \brk T = 2$.

For example, if we look for decompositions of the form
\begin{equation}\label{eq:DecompRestricted}
\begin{bmatrix}ac-bd\\ad+bc\end{bmatrix} = 
\begin{bmatrix}x_{11} & x_{12} \\x_{21} & x_{22}\end{bmatrix}
\begin{bmatrix}(a+y_{12} b)\cdot (c+z_{12}d) \\(y_{21}a+b)\cdot (z_{21}c+d)\end{bmatrix},
\end{equation}
then our setup tracking one path yielded the decomposition
$$
\begin{array}{rcl} ac - bd &=& ~\left((a-ib)\cdot(c-id) - (b-ia)\cdot(d-ic)\right)/2 \\
ad+bc &=& i\left((a-ib)\cdot(c-id) + (b-ia)\cdot(d-ic)\right)/2\end{array}$$
where $i = \sqrt{-1}$
with the two multiplication being 
$(a-ib)\cdot(c-id)$
and 
$(b-ia)\cdot(d-ic)$.

\subsubsection*{Over the real numbers}

Since \eqref{eq:Gauss} shows that $\rk_\bR T \leq 3$, we 
know that $\rk_\bR T = 3$ if and only if $\rk_\bR T > 2$.
In \eqref{eq:DecompRestricted}, we used a specialized
form to compute a decomposition over $\bC$.  
In this case, there were only finitely many decompositions
and all were not real. 

We could also work with a fully general formulation:
\begin{equation*}
\begin{bmatrix}ac-bd\\ad+bc\end{bmatrix} = 
\begin{bmatrix}x_{11} & x_{12} \\x_{21} & x_{22}\end{bmatrix}
\begin{bmatrix}(y_{11} a+y_{12} b)\cdot (z_{11}c+z_{12}d) \\(y_{21}a+y_{22}b)\cdot (z_{21}c+z_{22}d)\end{bmatrix}
\end{equation*}
which, by taking coefficients, defines a variety $V\subset\bC^{12}$, 
the union of two irreducible
varieties $V_1$ and $V_2$, each having dimension~$4$ and degree~$9$.
Using two different approaches, we show 
that $V\cap\bR^{12} = \emptyset$.

First, using the setup from Section~\ref{Sec:RealRank}, we compute the critical points
of the distance between~$V$ and a random point in $[-1,1]^{12}$.  
Since this yields $18$ critical points, all of which are nonreal, we know
$\rk_\bR T > 2$.

Alternatively, since the two irreducible components of $V$ are complex conjugates of each other, we know that $V\cap\bR^{12}$ is contained in $V_1\cap V_2$.  
Since $V_1\cap V_2 = \emptyset$, we again see that $\rk_\bR T > 2$.  

\subsection{A Coppersmith-Winograd tensor}\label{Ssec:CWtesnor}

In \cite{CW}, the following tensor in $\bC^3\otimes\bC^3\otimes\bC^3$ is considered:
$$T = a_1\otimes b_2 \otimes c_2
+ a_2\otimes b_1 \otimes c_2
+ a_2\otimes b_2 \otimes c_1
+ a_1\otimes b_3 \otimes c_3
+ a_3\otimes b_1 \otimes c_3
+ a_3\otimes b_3 \otimes c_1$$
where $\rk T = \brk T = 4$.  In fact, 
the secant variety $\sigma_4(\bC^3\times\bC^3\times\bC^3)$
is defective since it is expected to fill the ambient space 
but is actually a hypersurface of degree $9$ \cite{Stra83:laa}.

We used Prop.~\ref{prop:BasicMembership} upon computing 
a pseudowitness set for $\sigma_3(\bC^3\times\bC^3\times\bC^3)\subset\bC^{27}$, which has dimension~$21$ and degree $414$,
to verify that $\rk T \geq \brk T > 3$.  
In particular, using this pseudowitness set, 
the method of~\cite{aCM}
yields that $\sigma_3(\bC^3\times\bC^3\times\bC^3)$ is
arithmetically Cohen-Macaulay and defined by $27$ quartics.

We next compute all decompositions of $T$ of the form
\begin{equation}\label{eq:CWform}
\sum_{i=1}^4 (r_{i1} a_1 + r_{i2} a_2 + r_{i3} a_3)\otimes
(b_1 + s_{i2} b_2 + s_{i3} b_3)\otimes
(c_1 + t_{i2} c_2 + t_{i3} c_3).
\end{equation}
The tensor $T$ has a two-dimensional family 
of degree $30$ of decompositions of the form \eqref{eq:CWform}
which decomposes into $6$ irreducible components, each of degree $5$.  Hence, we have verified that $\rk T = \brk T = 4$.  
In fact, the~$6$ irreducible components arise
in three pairs of complex conjugates, say $V_i$ and $\overline{V_i}$ for $i = 1,2,3$.  Since, for each $i$, $V_i\cap\overline{V_i} = \emptyset$, $T$ does not have a real decomposition of the form \eqref{eq:CWform}.

\subsection{Comparison with cactus rank}\label{Ssec:Cactus}

The following example shows that our method computes $X$-border rank and not the \textit{cactus rank}, which was recently reintroduced in the literature 
(in \cite{IK}, it was defined as ``scheme length,'' and the first definition of cactus rank is in \cite{br} after paper \cite{bucbuc} where the cactus variety was first introduced). 
The following example was first published in \cite{bbm} thanks
to a suggestion from  W. Buczy\'nska and J. Buczy\'nski who proved it
in \cite{bucbuc} as a very peculiar but illustrative case where the
$X$-border rank of a polynomial cannot be computed from
a punctual scheme of the same length: 
$$T=x_0^2x_2+6x_1^2x_3-3(x_0+x_1)^2x_4.$$
The $X$-border rank of $T$ with respect to $X=\nu_3(\mathbb{P}^4)$ is $5$.
In fact, one can explicitly write down a family $T_\epsilon$ having rank $5$ with
$T_\epsilon\rightarrow T$, namely
$$
T_{\epsilon}= {1\over 3\epsilon  } (x_{0}+\epsilon x_{2})^{3} + 6(x_{1}+\epsilon x_{3})^{3}
-3(x_{0}+x_{1}+ \epsilon x_{4})^{3} + 3(x_{0} + 2\, x_{1})^{3} - (x_{0} + 3 x_{1})^{3}.
$$
However, it is not possible to find a scheme of length $5$ apolar to $T$
so that the cactus rank (and the rank) of $T$ is at least $6$
\cite{bbm,bucbuc}.

To verify that $\brk T > 4$, we compute a pseudowitness set for $\sigma_4(X)$, 
which was accomplished by starting with one point
and using $77$ random monodromy loops to generate additional points.
The trace test verified that $\deg \sigma_4(X)=\mbox{36,505}$. 
Thus, upon tracking 36,505 solution paths to perform
the membership test from Prop.~\ref{prop:BasicMembership}, 
we find that all converged and none of the endpoints correspond to $T$
yielding $\brk T > 4$.

A pseudowitness set for $\sigma_5(X)$ was computed
using a similar approach showing $\deg \sigma_5(X) =\mbox{24,047}$. 
After tracking 24,047 paths
to perform the membership test from Prop.~\ref{prop:BasicMembership}, we find a nonconvergent path whose projection converges to $T$ 
thereby showing $\brk T=5$ and providing an indication that $\rk T > 5$.

As with other examples, the pseudowitness sets computed for this example 
can be stored and reused to test whether other given cubic forms in $5$ variables
have border rank $4$ and $5$, respectively.

\subsection{Generic elements}\label{Ssec:GenDecomp}

We next compute decompositions of generic elements by
tracking one path as in Section~\ref{Sec:LargeRank}.

The following example was posed to one of the authors by M.~Mella a few years ago when the algorithm in \cite{OO} was not developed yet. 
In particular, Mella asked for a decomposition of a general polynomial of degree 5 in 3 variables, such as:
$$\mbox{\small
$\begin{array}{ll}
T~=&17051 x_0^5+41500 x_0^4 x_1+720 x_0^3 x_1^2+11360 x_0^2 x_1^3+95010 x_0 x_1^4~+19345 x_1^5-18095 x_0^4 x_2-281420 x_0^3 x_1 x_2\\
~&+~427290 x_0^2 x_1^2 x_2-367940 x_0 x_1^3 x_2+73860 x_1^4 x_2+243470 x_0^3 x_2^2-533370 x_0^2 x_1 x_2^2~+~518670 x_0 x_1^2 x_2^2\\
~&-~273140 x_1^3 x_2^2+156350 x_0^2 x_2^3-323300 x_0 x_1 x_2^3~+
383760 x_1^2 x_2^3+80245 x_0 x_2^4-277060 x_1 x_2^4+84411 x_2^5.
\end{array}$}$$

For $X = \nu_{5}(\bP^{2})$, $\sigma_7(X)$ fills the ambient 
space and we can compute a decomposition by tracking 
one solution path. 
Aiming to find a decomposition of the form
$$T=\sum_{j=1}^7 \lambda_j(x_0 + a_{j1} x_1 + a_{j2}x_2)^5,$$
the endpoint of our path yielded the decomposition 
$$
\begin{array}{ll}
T~= & 243(x_0 + 8/3 x_1 - 2/3 x_2)^5 - 
32768(x_0 - 3/4 x_1 + 1/8 x_2)^5~+ 16807(x_0 - x_1 + x_2)^5  \\
~&-~32(x_0 + 2 x_1 -4 x_2)^5~+ 32768(x_0 - 1/2 x_2)^5 + 
32(x_0 - 3/2 x_1 + 5/2 x_2)^5 + 
(x_0 -5 x_1 + 8 x_2)^5.
\end{array}$$

We note that the algorithm in \cite{OO} can decompose 
general tensors in 3 variables up to degree 6
whereas our numerical homotopy algorithm can be used to decompose polynomials of higher degree.  To illustrate, we start with a general element with a known decomposition, say 
$$\mbox{\small $\begin{array}{rl}
T~= &
91(x_0 -7/2 x_1 + 9/2x_2)^7 + 58(x_0 -3/2 x_1 - 4/3x_2)^7 - 21(x_0 + 2x_1 - 9/2x_2)^7
 +~33(x_0 + 3x_1 - x_2)^7 \\
 & ~+ 54(x_0 - 3x_1 - 5/3x_2)^7 + 88(x_0 - 3x_1 - 10/3x_2)^7 
 -~37(x_0 - 5x_1 + x_2)^7 + 93(x_0 - x_1 - 8x_2)^7 \\
 & ~+ 12(x_0 + 9/2x_1 + 10x_2)^7 
 -~89(x_0 - 5x_1 - 1/2x_2)^7 - 99(x_0 - x_1 - 3x_2)^7 - 22(x_0 - 1/3x_1 + 4x_2)^7.
\end{array}$}$$

After expanding $T$ to extract the coefficients,
which we rescale all of them to improve numerical conditioning,
we track one path in $36$ dimension.  
The resulting decomposition (where coefficients are rounded for readability and $i = \sqrt{-1}$) found is:
$$\mbox{\footnotesize $
\begin{array}{rl}
T~=&90.5217(x_0 - 1.0016x_1 - 8.0256x_2)^7 + 133.8171(x_0 - 3.6909x_1 - 2.8478x_2)^7 -~97.4074(x_0 - 5.0606x_1 + 0.2459x_2)^7 \\
& +~89.4516(x_0 - 3.4857x_1 + 4.5217x_2)^7 -~20.6552(x_0 - 0.3125x_1 + 4.125x_2)^7 + 12.0133(x_0 + 4.4986x_1 + 9.9992x_2)^7 \\
&+~32.5455(x_0 + 3.0145x_1 - x_2)^7 + 83.1754(x_0 - 2.3582x_1 - 1.5306x_2)^7 -~19.4167(x_0 + 2.0658x_1 - 4.4909x_2)^7 \\
&-~83.0069(x_0 - 4.3651x_1 - 2.1818x_2)^7 -~(30.0192+29.276i)(x_0 - (0.95833+0.2729i)x_1 - (3.9167+0.8299i)x_2)^7\\
&-~(30.0192-29.276i)(x_0 - (0.95833-0.2729i)x_1 - (3.9167-0.8299i)x_2)^7.
\end{array}$}$$

We note that the original decomposition and this one are simply two points
in the same fiber.  Starting from this computed decomposition, 
we can use the approach of \cite{HOOS} to compute all of the 
other points in the fiber.
In this case, we obtain four other decompositions, the 
one that we originally started with and the following three:
$$\mbox{\scriptsize $
\begin{array}{rl}
T~= &
-80.3535(x_0 - 0.96044x_1 - 3.2042x_2)^7 + 91.7624(x_0 - 3.4925x_1 + 4.4929x_2)^7 + 11.9529(x_0 + 4.5041x_1 + 10.005x_2)^7 \\
& -~42.331(x_0 - 5.1095x_1 - 1.1877x_2)^7 +~58.8757(x_0 - 1.9096x_1 - 0.70898x_2)^7 + 0.42442(x_0 - 6.5033x_1 - 3.8957x_2)^7 \\
& +~93.6035(x_0 - 1.0023x_1 - 7.9934x_2)^7 + 33.1366(x_0 + 2.9983x_1 - 1.0053x_2)^7 - 21.1233(x_0 + 2.0048x_1 - 4.4804x_2)^7 \\
&  -~81.3951(x_0 - 4.9804x_1 + 0.50977x_2)^7 +~121.1404(x_0 - 3.0446x_1 - 3.0528x_2)^7 + 12.4957(x_0 + 4.48x_1 + 9.9434x_2)^7 \\
= & -19.5517(x_0 - 0.49254x_1 + 4.36x_2)^7 - 1.4462(x_0 + 3.3148x_1 + 5.9615x_2)^7 - 24.6931(x_0 - 0.46377x_1 + 3.855x_2)^7\\
& -~64.3704(x_0 - 0.73438x_1 - 3.1471x_2)^7 +~94.5455(x_0 - 0.97674x_1 - 7.9818x_2)^7 - 18.506(x_0 + 1.7797x_1 - 4.9778x_2)^7 \\
&-~115.1045(x_0 - 5.0408x_1 + 0.0031746x_2)^7 + 30.4559(x_0 + 3.0345x_1 - 0.70492x_2)^7 \\
& +~126.7561(x_0 - 2.5128x_1 - 1.9074x_2)^7 +~(13.1591+9.58983i)(x_0 - (3.2017-1.1206i)x_1 - (4.0938-0.15711i)x_2)^7 \\
& +~89.4074(x_0 - 3.4483x_1 + 4.549x_2)^7 +~(13.1591-9.58983i)(x_0 - (3.2017+1.1206i)x_1 - (4.0938+0.15711i)x_2)^7 \\
= & -19.5946(x_0 - 0.21053x_1 + 4.1273x_2)^7 + 91.4966(x_0 + -0.99627x_1 + -8.0167x_2)^7  \\
&+~115.5185(x_0 - 2.7188x_1 - 3.4694x_2)^7  +~88.0263(x_0 - 3.5054x_1 + 4.5294x_2)^7 \\
&-~(3.5882+2.2523i)(x_0 - (5.4688+0.95833i)x_1 + (1.2571-0.93548i)x_2)^7 -~99.6415(x_0 - 5.1771x_1 - 0.12821x_2)^7\\
& -~(3.5882-2.2523i)(x_0 - (5.4688-0.95833i)x_1 + (1.2571+0.93548i)x_2)^7 -~23.76(x_0 + 1.9074x_1 - 4.4706x_2)^7 \\
&-~(14.6087-73.7949i)(x_0 - (1.6296-0.50355i)x_1 - (2.6154+1.0169i)x_2)^7 +~33.3191(x_0 + 2.9896x_1 - 0.99711x_2)^7\\
&-~(14.6087+73.7949i)(x_0 - (1.6296+0.50355i)x_1 - (2.6154-1.0169i)x_2)^7 + 12.0313(x_0 + 4.4976x_1 + 9.9967x_2)^7 \\
\end{array}$}$$
where $i = \sqrt{-1}$ and all numbers have been rounded for readability.

\subsection{A degree 110 hypersurface}\label{Ssec:Hypersurfaces}

In \cite{CTY10}, the authors consider the hypersurface $\mathcal{M}\subset\bP^{15}$ defined by the closure of elements of the form
$$p_{ijkl}=\left( \sum_{s=0}^1 a_{si}b_{sj}c_{sk}d_{sl} \right) \left( \sum_{r=0}^1 e_{ri}f_{rj}g_{rk}h_{rl} \right)\hbox{~for all~~}(i,j,k,l)\in\{0,1\}^4.$$
The variety $\mathcal{M}$ is a Hadamard product, namely $\mathcal{M}=\sigma_2(S). \sigma_2(S)$ where $S$ is the Segre embedding of $\mathbb{P}^1\times\mathbb{P}^1\times\mathbb{P}^1\times\mathbb{P}^1$ into $\mathbb{P}^{15}$.
The authors used this to show that $\deg \mathcal{M}=110$ and 
the Newton polytope of $\mathcal{M}$ has 17,214,912 
vertices.  However, they were unable to compute an explicit 
defining equation.  
Since our approach is based on witness and pseudowitness sets,
we do not need explicit equations to test for membership in $\mathcal{M}$.

Starting from one point on $\mathcal{M}$, we computed a pseudowitness set for $\mathcal{M}$ using $26$ monodromy loops.
This computation yielded $\deg \mathcal{M}=110$, as expected. 
Let $\mathcal{M}^0$ denote the corresponding constructible set so that $\mathcal{M}=\overline{\mathcal{M}^0}$. 
To demonstrate our membership test, we consider the point
$$v=(2,3,0,-1,4,2,0,1,1,-2,2,0,1,0,-4,3)\in\bP^{15}.$$
By tracking $110$ paths, we find that $v\not\in\mathcal{M}$. 
Next, we consider the point
$$\begin{array}{lcl}
w&=&(2528064,-3079104,-2340576,2038176,-1804032,2398464,1539648,-1524096,\\
~&~&1104000,456086,-2403720,284016,-511104,-502072,1220472,-23424)\in\bP^{15}.
\end{array}$$
In this case, our test yields $w\in\mathcal{M}^0$
with $w$ arising from
{\small
$$\begin{array}{llll}
a_{00}=-6.5220+1.8885i & c_{00}=-12.5203+0.5994i & e_{00}=-16.9364-9.3010i & g_{00}=-1.4597+9.8573i\\
a_{01}=17.1112+2.1887i & c_{01}=7.2261-2.6147i & e_{01}=-2.8396-0.2652i & g_{01}=-3.9468+8.2471i\\
a_{10}=-0.0000+0.0000i & c_{10}=-10.9459-1.5479i & e_{10}=-2.7150+6.8142i & g_{10}=-1.6724-1.2813i\\
a_{11}=4.7144+0.5813i & c_{11}=-17.8454+2.8591i & e_{11}=-0.1511-5.0503i & g_{11}=-2.9854-4.2021i\\
b_{00}=1.0901-1.3154i & d_{00}=18.1529+5.5948i & f_{00}=-3.0265-6.0562i & h_{00}=-0.3222-0.2848i\\
b_{01}=0.2466+0.3406i & d_{01}=11.1640-8.9931i & f_{01}=16.6817-14.5017i & h_{01}=-0.1314+1.0045i \\
b_{10}=-22.1726-13.8102i & d_{10}=0.0335-0.1311i & f_{10}=2.9081-0.6907i & h_{10}=-0.8416+7.6337i\\
b_{11}=0.7238-0.6901i & d_{11}=-0.0255-0.0286i & f_{11}=3.2373+6.1380i & h_{11}=3.0027-1.5781i\\
\end{array}$$
}
where $i=\sqrt{-1}$ and all decimals have been rounded for readability.

\subsection{Joins for decomposable polynomials}\label{Ssec:JoinDecomposable}

Consider the closure of the spaces in $\bP({\Sym}^4\bC^4)\subset\bP^{35}$
which can be written as the sum of $r$ squares of
quadrics and the sum of $s$ fourth powers of linear forms:
$$f = \sum_{i=1}^r q_i^2 + \sum_{j=1}^s \ell_j^4,$$
i.e., $J(\sigma_r(\nu_2(\mathbb{P}(\Sym^2\mathbb{C}^4))), \sigma_s(\nu_4(\mathbb{P}^3)))$.
The following lists the expected dimension, which
is the minimum of the dimension of the ambient space, namely $35$,
and $10r + 4s$, and the actual dimension for various $r$ and $s$.
The ones in bold correspond to the defective cases.
$$\begin{array}{|c||c|c|c|c|c|c|c|c|c|c||c|c|c|c|c|c|c|c|}
\hline
r & \multicolumn{10}{|c||}{0} & \multicolumn{8}{c|}{1}\\
\hline
s & 1 & 2 & 3 & 4 & 5 & 6 & 7 & 8 & 9 & 10& 0 & 1 & 2 & 3 &4&5&6&7\\
\hline
\hline
\hbox{Expected dim} &4&8&12&16&20&24&28&32&35&35& 10&14&18&22&26&30&34&35\\
\hline
\hbox{Actual dim} &4&8&12&16&20&24&28&32&{\bf 34}&35& 10&14&18&22&26&30&34&35 \\
\hline
\end{array}
$$
$$
\begin{array}{|c||c|c|c|c|c||c|c|c||c|c||c|}
\hline
r & \multicolumn{5}{|c||}{2} & \multicolumn{3}{c||}{3} & \multicolumn{2}{c||}{4} & 5\\
\hline
s & 0 & 1 & 2 & 3 & 4 & 0 & 1 & 2 & 0 & 1 & 0\\
\hline
\hline
\hbox{Expected dim} & 20 & 24 & 28 & 32 & 35 & 30 & 34 & 35 & 35 & 35 & 35 \\
\hline
\hbox{Actual dim} & {\bf 19} & {\bf 23} & {\bf 27} & {\bf 31} & 35 & {\bf 27} & {\bf 31} & 35 & {\bf 34} & 35 & 35\\
\hline
\end{array}
$$

We consider the two defective hypersurface cases.  
For the case $(r,s) = (0,9)$, we verified this yields a 
degree $10$ hypersurface \cite{AH1},
while the case $(r,s) = (4,0)$ is a degree~38,475 hypersurface \cite{BHORS}.

\subsection{Best low rank approximation}\label{Ssec:LowRank}

Motivated by \cite[Ex.~7 \& 8]{OSS:14}, we consider $S = \sigma_2(\nu_4(\bP^2))\subset\bP^{14}$.
Starting with one point in $S$, we computed a 
pseudowitness set for $S$ using $14$ monodromy
loops which yielded $\deg S = 75$.  
Thus, we can test membership in $S$ by tracking at most $75$ paths.  For example, consider the tensor from T. Schultz listed in \cite[Ex.~7]{OSS:14}:
$$ 
\begin{array}{rcl}
T &=& 0.1023 x_0^{4}
+0.0197 x_1^{4}
+0.1869 x_2^{4}
+0.0039 x_0^{2}x_1^{2}
+0.0407 x_0^{2}x_2^{2}
-0.00017418 x_1^{2} x_2^{2} \\
&&-~0.002 x_0^{3}x_1 
+0.0581 x_0^{3}x_2
+0.0107 x_0 x_1^{3}
+0.0196 x_0 x_2^{3}
+0.0029 x_1^{3}x_2 
 -0.0021 x_1x_2^{3}\\
&& -~0.00032569 x_0^{2}x_1x_2 
 -0.0012 x_0 x_1^{2} x_2
 -0.0011 x_0 x_1 x_2^{2}.
\end{array}
$$
Following the membership test from Section~\ref{Sec:Membership},
since all $75$ paths converged to points which did not correspond
to $[T]$, we know that $[T]\notin S$.  

Since it is expected that noise in the data moves 
an element off the variety, 
one often wants to compute the ``best'' low rank approximation.
In this case, we want a real element in $S$ which minimizes the 
Euclidean distance of the coefficients, treated as a vector in $\bC^{15}$.  
One approach is to compute all critical points which
was performed in \cite[Ex.~8]{OSS:14}.
This resulted in $195$ points outside of the set of rank $1$ elements, 
i.e., $\nu_4(\bP^2)$, of which $9$ are real.
In particular, there are $2$ are local minima and $7$ saddle points, with the
global minimum approximately being:
$$(0.0168 x_0 - 0.00189 x_1 + 0.657 x_2)^4 + 
(0.56 x_0 - 0.00254 x_1 + 0.0988 x_2)^4.$$

As discussed in Section~\ref{Sec:RealRank}, we can
also consider using local gradient descent homotopies 
to attempt to compute local minimizers of the distance function.  
In this case, since~$S$ is known to be defined by $148$ cubic polynomials, we utilized a random real combination of these polynomials.  In our experiment,
the path starting at $T$ produced a critical point 
of the distance function that was indeed the 
global minimizer above.  

\subsection{Skew-symmetric tensors}\label{Ssec:SkewSymm}

We next consider skew-symmetric tensors in $\bigwedge^3\mathbb{C}^7\subset\bC^{35}$ 
with respect to the Grassmannian $G(3,7)$.  By dimension counting, one
expects a general element to have rank $3$, but one can easily 
verify using \eqref{eq:DimensionImage} 
that $\sigma_3(G(3,7))$ is a hypersurface.
Hence, a general element has rank $4$.
The defectivity of this hypersurface has already been observed in
\cite{AOP,BDdG,CGG} and it is conjectured that, together with $\sigma_3(G(4,8))$, $\sigma_4(G(4,8))$, $\sigma_4(G(3,9))$ and their duals, they are the only 
defective secant varieties to a Grassmannian. 

To the best of our knowledge, the degree of this hypersurface
has not been computed before.  
By using a pseudowitness set computation, we find that 
this hypersurface has degree~$7$.  
Even though a degree $7$ polynomial defining this hypersurface is not known, we are able to decide membership in this hypersurface
by tracking at most $7$ paths.

We now turn to $\sigma_2(G(3,7))\subset\bC^{35}$ which is an
irreducible variety of dimension $26$ and degree~$735$.  In particular,
we aim to compute the codimension one components of its boundary as follows.
To simplify the computation, we consider the maps $A_i:\bC^5\rightarrow\bC^7$ defined by
$$\hbox{\scriptsize
$A_1(\alpha_1,\alpha_2,\alpha_3,\alpha_4,\alpha_5) = \left[\begin{array}{c} \alpha_1 \\ 0 \\ 0 \\ \alpha_2 \\ \alpha_3 \\ \alpha_4 \\ \alpha_5\end{array}\right],~~~
A_2(\alpha_1,\alpha_2,\alpha_3,\alpha_4,\alpha_5) = \left[\begin{array}{c} 0 \\ \alpha_1 \\ 0 \\ \alpha_2 \\ \alpha_3 \\ \alpha_4 \\ \alpha_5\end{array}\right],~~~
A_3(\alpha_1,\alpha_2,\alpha_3,\alpha_4,\alpha_5) = \left[\begin{array}{c} 0 \\ 0 \\ \alpha_1 \\ \alpha_2 \\ \alpha_3 \\ \alpha_4 \\ \alpha_5\end{array}\right].$}$$
Following Section~\ref{Sec:Boundary}, we slice
down to the curve case. 
For general affine linear polynomials $\ell_1,\dots,\ell_{25}$ in $35$ variables
and $p$ in $27$ variables, we consider the irreducible curve
{\scriptsize
$$\sC = \overline{\left.\left\{Z\cdot h^3 -
\sum_{i=1}^2 A_1(a^i_{1},a^i_{2},a^i_{3},a^i_{4},a^i_{5})\wedge A_2(a^i_{1},a^i_{6},a^i_{7},a^i_{8},a^i_{9})\wedge A_3(a^i_{1},a^i_{10},a^i_{11},a^i_{12},a^i_{13})~\right|~\ell_k(Z) = p(h,a^i_j)=0\right\}}.
$$}For a general $\beta\in\bC$, we found
that $\sC\cap\Var(h - \beta)$ consists of 48,930 points.  By tracking
the homotopy paths defined by $\sC\cap\Var(h - \beta\cdot t)$, 
44,520 paths yielded points in $\sC\cap\Var(h)$
with the corresponding points arising in two types.
The first type, which consists of 3262 distinct $Z$ coordinates,
each corresponding to the endpoint of $12$ paths, 
either have $a^1_1 = 0$ or $a^2_1 = 0$.  These points are
in the boundary based on the choice of parameterization used 
in $\sC$, but are not actually in the boundary of $\sigma_2(G(3,7))$.  
The second type, which consists of 1792 distinct $Z$ coordinates,
each corresponding to the endpoint of $3$ paths,
are indeed in the boundary of $\sigma_2(G(3,7))$.  
We note that $\mbox{44,520} = 12\cdot 3262 + 3\cdot 1792$.  
Moreover, the 1792 points form a witness point set for the following
irreducible variety of dimension $25$ and degree 1792:
$$\overline{\{v_1\wedge v_2 \wedge w_1 + v_1 \wedge w_2 \wedge v_3 + w_3 \wedge v_2 \wedge v_3~|~v_i,w_i\in\bC^7\}}\subset\bC^{35}$$
which is precisely the codimension one component of the boundary
of $\sigma_2(G(3,7))$.  

\medskip

In \cite{bv} the authors used the technique proposed by the present paper to compute the order of magnitude of the number of decompositions of a generic skew-symmetric tensor in $\bigwedge^4\mathbb{C}^9$ that is a perfect case.

\subsection{Matrix multiplication with zeros}\label{Ssec:MatrixMult}

We close with computing the border ranks of 
some tensors arising from the 
multiplication of a $2\times2$ matrix and a $2\times3$ matrix
with zero entries. 
One special case of this is the matrix multiplication tensor
of a $2\times2$ matrix with a zero term and a $2\times2$ matrix,
i.e., a $2\times3$ matrix with one column consisting of zeros.
In \cite{Bini5}, an explicit algorithm
shows that its border rank is $\leq 5$ 
with this observation leading
to an upper bound on the exponent of matrix 
multiplication $\omega$ of $\log_{12}1000 \approx 2.7799$.
Another reason for computing the border rank of 
such tensors arises from \cite{LR15} where the 
border rank of the matrix multiplication tensor 
for matrices of size $2\times 2$ and $2\times n$
is considered.  The results in \cite{LR15} 
build on computational results in~\cite{AS,Smirnov}.

The following table lists the border ranks
of various matrix multiplication tensors.  
In each matrix, an entry is $\ast$ if that entry 
can take an arbitrary value while $0$ means that entry is $0$. 

$$
\begin{array}{cccc}
\hbox{Number} & \hbox{Format} & \hbox{Ambient Space} & \hbox{Border Rank} \\
1 & \left[\begin{array}{cc} \ast & \ast \\ \ast & 0 \end{array}\right]
\cdot\left[\begin{array}{ccc} \ast & \ast & 0\\ \ast & \ast & 0 \end{array}\right] & \bC^3\otimes\bC^4\otimes\bC^4 & 5 \\ \vspace{-0.1in} \\
2 & \left[\begin{array}{cc} \ast & \ast \\ \ast & 0 \end{array}\right]
\cdot\left[\begin{array}{ccc} \ast & 0 & 0\\ \ast & \ast & \ast \end{array}\right] & \bC^3\otimes\bC^4\otimes\bC^4 & 5 \\\vspace{-0.1in} \\
3 & \left[\begin{array}{cc} \ast & \ast \\ \ast & 0 \end{array}\right]
\cdot\left[\begin{array}{ccc} \ast & 0 & \ast\\ \ast & \ast & 0 \end{array}\right] & \bC^3\otimes\bC^4\otimes\bC^5 & 6 \\\vspace{-0.1in} \\
4 & \left[\begin{array}{cc} \ast & \ast \\ \ast & 0 \end{array}\right]
\cdot\left[\begin{array}{ccc} \ast & \ast & \ast\\ \ast & 0 & 0 \end{array}\right] & \bC^3\otimes\bC^4\otimes\bC^6 & 6 \\\vspace{-0.1in} \\
5 & \left[\begin{array}{cc} \ast & \ast \\ \ast & 0 \end{array}\right]
\cdot\left[\begin{array}{ccc} \ast & \ast & 0\\ \ast & \ast & \ast \end{array}\right] & \bC^3\otimes\bC^5\otimes\bC^5 & 6 \\\vspace{-0.1in} \\
6 & \left[\begin{array}{cc} \ast & \ast \\ \ast & \ast \end{array}\right]
\cdot\left[\begin{array}{ccc} \ast & \ast & \ast \\ \ast & \ast & 0 \end{array}\right] & \bC^4\otimes\bC^5\otimes\bC^6 &  8\\\vspace{-0.1in} \\
\end{array}
$$

\paragraph{Number 1}
As mentioned above, the upper bound on border rank is provided in \cite{Bini5}.  The lower bound is provided
in \cite[Prop.~3.2]{LR15} which is based on
an equation provided in \cite{Stra83:laa}. 

\paragraph{Number 2}
The lower bound follows from 
using Prop.~\ref{prop:BasicMembership}
applied to the secant variety 
$\sigma_4(\bC^3\times\bC^4\times\bC^4)$
which has dimension $36$ and degree 252,776 \cite{D15}.
Alternatively, one could have followed a similar
approach as in \cite[Prop.~3.2]{LR15} 
using the defining equations for this secant 
variety, e.g., see \cite{BO}.
The upper bound is trivial since the 
standard definition of matrix multiplication
yields a rank $5$ decomposition.
Nonetheless, we note that the secant variety 
$\sigma_5(\bC^3\times\bC^4\times\bC^4)$
which has dimension $44$, i.e., it is defective, 
and degree $1716$.  In particular, the methods of \cite{aCM,aG}
show that $\sigma_5(\bC^3\times\bC^4\times\bC^4)$ is arithmetically Gorenstein and generated by 144 polynomials
of degree 11.  

\paragraph{Number 3}
The upper bound is trivial since the 
standard definition of matrix multiplication
yields a rank~$6$ decomposition.
Additionally, $\sigma_6(\bC^3\times\bC^4\times\bC^5)$ fills
the ambient space.
The lower bound is shown using Prop.~\ref{prop:BasicMembership}
applied to $\sigma_5(\bC^3\times\bC^4\times\bC^5)$
which has dimension $50$ and degree 581,584.

\paragraph{Number 4}
The upper bound is shown using Prop.~\ref{prop:BasicMembership}
applied to $\sigma_6(\bC^3\times\bC^4\times\bC^6)$
which has dimension 66 and degree 206,472.
To show the lower bound, consider the problem
of computing the $(1,1)$, $(1,2)$, $(2,1)$, and
$(2,2)$ entries and the sum of the $(1,3)$ and $(2,3)$
entries of the matrix product.  This is 
a problem in $\bC^3\otimes\bC^4\otimes\bC^5$
whose border rank is clearly a lower bound
on the border rank of the original problem.  
As in Number 3, $\sigma_5(\bC^3\times\bC^4\times\bC^5)$
with Prop.~\ref{prop:BasicMembership}
shows that a lower bound on the border rank is indeed $6$.

\paragraph{Number 5}
The lower bound follows immediately from Number 3
while the upper bound follows from \cite{Bini5} 
with one additional multiplication.

\paragraph{Number 6}
This was the motivating problem suggested
to the first and third authors by JM Landsberg
due to a gap between the upper bound of $8$ 
in \cite[Table~5]{Smirnov}
and the lower bound of $7$ from~\cite[Prop.~3.2]{LR15}.

Similar to Number 4, we will demonstrate a lower
bound by considering a subproblem.  In this case, 
we replace each entry of the $2\times2$ matrix with 
a random linear form in $3$ variables yielding
a problem in $\bC^3\otimes\bC^5\otimes\bC^6$.  
We showed the lower bound on the border rank of this 
subproblem is $8$ as follows.

Suppose that $a_1,a_2,a_3$, $b_1,\dots,b_5$, and $c_1,\dots,c_6$
are the standard basis elements for $\bC^3$, $\bC^5$, and $\bC^6$, respectively.  Consider
the projection $\pi:\bC^{90}\rightarrow\bC^{89}$ that ignores the entry $a_3\otimes b_5\otimes c_6\in\bC^3\otimes\bC^5\otimes\bC^6 \simeq \bC^{90}$.  Then, $\overline{\pi(\sigma_7(\bC^3\times\bC^5\times\bC^6))}$ is a variety in $\bC^{89}$ of dimension 
$84$ and degree 455,176 for which the membership test
shows that it does not contain the image under $\pi$ of the subproblem tensor.


\pdfbookmark[1]{Acknowledgements}{Sec:Ack}
\section*{Acknowledgements}\label{Sec:Ack}

AB and JDH would like to thank
the Institut Mittag-Leffler (Djursholm, Sweden)
for their support and hospitality
which is where many of the ideas of this paper were first conceived.

\pdfbookmark[1]{References}{thebibliography}

\end{document}